\newtheorem{thm}{Theorem}[section]
 \newtheorem{lem}[thm]{Lemma}
 \newtheorem{prop}[thm]{Proposition}
 \newtheorem{defn}[thm]{Definition}
 \newtheorem{rem}[thm]{Remark}
\DeclareMathOperator\supp{supp}
\DeclareMathOperator\Str{Str}
\DeclareMathOperator\Gla{Gla}
\DeclareMathOperator\loc{loc}
\DeclareMathOperator\mix{mix}
\begin{document}

\begin{frontmatter}



\title{Nonexistence of global solutions for a weakly coupled system of semilinear damped wave equations in the scattering case with mixed nonlinear terms}


\author[Frei]{Alessandro Palmieri}
\ead{alessandro.palmieri.math@gmail.com}

\author[Toho]{Hiroyuki Takamura}
\ead{hiroyuki.takamura.a1@tohoku.ac.jp}

\address[Frei]{Institute of Applied Analysis, Faculty for Mathematics and Computer Science, Technical University Bergakademie Freiberg, Pr\"{u}ferstra{\ss}e 9, 09596, Freiberg, Germany}

\address[Toho]{Mathematical Institute, Tohoku University, Aoba, Sendai 980-8578, Japan}

\begin{abstract}

In this paper we consider the blow-up of solutions to a weakly coupled system of semilinear damped wave equations in the scattering case with nonlinearities of mixed type, namely, in one equation a power nonlinearity and in the other a semilinear term of derivative type. The proof of the blow-up results is based on an iteration argument. As expected, due to the assumptions on the coefficients of the damping terms, we find as critical curve in the $p$ - $q$ plane for the pair of exponents $(p,q)$ in the nonlinear terms the same one found by Hidano-Yokoyama and, recently, by Ikeda-Sobajima-Wakasa for the weakly coupled system of semilinear wave equations with the same kind of nonlinearities. In the critical and not-damped case we provide a different approach from the test function method applied by Ikeda-Sobajima-Wakasa 
to prove the blow-up of the solution on the critical curve, improving in some cases the upper bound estimate for the lifespan. More precisely, we combine an iteration argument with the so-called slicing method to show the blow-up dynamic of a weighted version of the functionals used in the subcritical case.
\end{abstract}

\begin{keyword}
Semilinear weakly coupled system; Damped wave equation; Blow-up; Scattering producing damping; Critical curve; Mixed nonlinearities. 


\MSC[2010] Primary 35L71 \sep 35B44; Secondary   35G50 \sep 35G55 \sep 35L05


\end{keyword}

\end{frontmatter}


\section{Introduction}

In this paper we consider a weakly coupled system of wave equations with time-dependent and scattering producing damping terms and with mixed kinds of power nonlinearity, namely, 
\begin{align}\label{weakly coupled system}
\begin{cases}
u_{tt}-\Delta u +b_1(t)u_t = |v|^q,  & x\in \mathbb{R}^n, \ t>0,  \\
v_{tt}-\Delta v +b_2(t) v_t = |\partial_t u|^p,  & x\in \mathbb{R}^n, \ t>0, \\
 (u,u_t,v,v_t)(0,x)= (\varepsilon u_0, \varepsilon u_1, \varepsilon v_0, \varepsilon v_1)(x) & x\in \mathbb{R}^n,
\end{cases}
\end{align}
where $b_1,b_2\in \mathcal{C}([0,\infty))\cap L^1([0,\infty))$ are nonnegative functions, $\varepsilon$ is a positive parameter describing the size of initial data and $p,q>1$. More precisely, we will focus on blow-up phenomena for local solutions and we will derive the corresponding upper bound for the lifespan.

%

In order to motivate the study of \eqref{weakly coupled system}, let us recall some semilinear models which are strongly related to this weakly coupled system.

Let us begin with the Cauchy problem for the semilinear wave equation with power nonlinearity
\begin{align}\label{semilinear wave equation}
\begin{cases}
u_{tt}-\Delta u = |u|^p,  & x\in \mathbb{R}^n, \ t>0,  \\
 (u,u_t,)(0,x)= (\varepsilon u_0, \varepsilon u_1)(x), & x\in \mathbb{R}^n.
\end{cases}
\end{align} After John's pioneering paper \cite{John79}, it was conjectured by Strauss in \cite{Str81} that the critical exponent for the Cauchy problem \eqref{semilinear wave equation} is the positive root of the quadratic equation $$(n-1)p^2-(n+1)p-2=0,$$ which is nowadays named after him Strauss exponent and denoted in this paper by $p_{\Str}(n)$. In the classical works 
\cite{Kato80,Glas81,Glas81B,Sid84,Scha85,LinSog95, Geo97,Tat01,Jiao03,YZ06,Zhou07} this conjecture is proved to be true. Here, critical exponent means that for $1<p\leqslant p_{\Str}(n)$ local in time solutions blow up in finite times under certain sign assumptions on the initial data and regardless of the smallness of these, while for $p>p_{\Str}(n)$ the global in time existence of small data solutions holds in suitable function spaces.  Moreover, the sharp lifespan estimate for local solutions has been derived both in the subcritical case and in the critical case, cf. 
\cite{Sid84,Lin90,Zhou92,Zhou93,LinSog96,TakWak11,ZH14,Tak15}.

A similar situation has been studied in the case of the Cauchy for the semilinear wave equation of derivative type as well, namely,
\begin{align}\label{semilinear wave equation Glassey}
\begin{cases}
u_{tt}-\Delta u = |\partial_t u|^p,  & x\in \mathbb{R}^n, \ t>0,  \\
 (u,u_t,)(0,x)= (\varepsilon u_0, \varepsilon u_1)(x), & x\in \mathbb{R}^n.
\end{cases}
\end{align} For \eqref{semilinear wave equation Glassey} it has been proved that the critical exponent is the so-called Glassey exponent $p_{\Gla}(n)\doteq \frac{n+1}{n-1}$, although the global in time existence in the supercritical case for non radial solutions is still open for spatial dimensions $n\geqslant 4$, see also 
\cite{Joh81,Mas83,Sch85,Ram87,Age91,Zhou01} for the blow-up results and \cite{Sid83,HT95,Tzv98,HWY12} for the global existence results.

Concerning the weakly coupled systems of semilinear wave equations
\begin{align}\label{weakly coupled system wave eq}
\begin{cases}
u_{tt}-\Delta u  = G_1(v,\partial_t v),  & x\in \mathbb{R}^n, \ t>0,  \\
v_{tt}-\Delta v  = G_2(u,\partial_t u),  & x\in \mathbb{R}^n, \ t>0, \\
 (u,u_t,v,v_t)(0,x)= (\varepsilon u_0, \varepsilon u_1, \varepsilon v_0, \varepsilon v_1)(x), & x\in \mathbb{R}^n,
\end{cases}
\end{align}
the cases $G_1(v,\partial_t v)=|v|^p$, $G_2(u,\partial_t u)=|u|^q$ and $G_1(v,\partial_t v)=|\partial_t v|^p$, $G_2(u,\partial_t u)=|\partial_t u|^q$ have been studied in \cite{DGM,DelS97,DM,AKT00,KT03,Kur05,GTZ06,KTW12} and in 
\cite{Deng99,Xu04,KKS06,ISW18}, respectively. While in the case of power nonlinearities (that is, for $G_1(v,\partial_t v)=|v|^p$, $G_2(u,\partial_t u)=|u|^q$) the critical curve is given by $$\max\left\{\frac{p+2+q^{-1}}{pq-1},\frac{q+2+p^{-1}}{pq-1}\right\}=\frac{n-1}{2},$$ the case of semilinear terms of derivative type (that is, for $G_1(v,\partial_t v)=|\partial_t v|^p$, $G_2(u,\partial_t u)=|\partial_t u|^q$) the critical curve is
 $$\max\left\{\frac{p+1}{pq-1},\frac{q+1}{pq-1}\right\}=\frac{n-1}{2},$$ even though the global existence part has been studied so far only in the three dimensional and radial symmetric case. Recently, the case with mixed nonlinear terms $G_1(v,\partial_t v)=|v|^q$, $G_2(u,\partial_t u)=|\partial_t u|^p$ has been investigated for \eqref{weakly coupled system wave eq} in \cite{HidYok16,ISW18}. In this paper we shall prove that the for same range of exponents $p,q>1$ as in \cite{ISW18} a blow-up result can be proved in the subcritiacal case even when we add as lower order terms in the linear part damping terms with time-dependent and scattering producing coefficients (see \cite{Wirth04,Wirth06,Wirth07} for this classification of a damping term with time-dependent coefficient for wave models). Furthermore, the same upper bound for the lifespan can be derived. In the critical case, we will restrict our considerations to the not-damped case, improving in some cases the upper bound for the lifespan with respect to \cite{ISW18}, but using a quite different method.
 
 Recently, several results for semilinear wave equations and for weakly coupled systems of semilinear wave equations have been proved in presence of time-dependent and scattering-producing coefficients for damping terms by Lai-Takamura, Wakasa-Yordanov and Palmieri-Takamura. More precisely, the blow-up dynamic for local solutions of 
 \begin{align*}
\begin{cases}
u_{tt}-\Delta u +b(t)u_t = G(u,\partial_t u),  & x\in \mathbb{R}^n, \ t>0,  \\
 (u,u_t)(0,x)= (\varepsilon u_0, \varepsilon u_1)(x), & x\in \mathbb{R}^n,
\end{cases}
\end{align*} 
has been considered in \cite{LT18Scatt,WakYor18damp} for the case of power nonlinearity $G(u,\partial_t u)=|u|^p$, in \cite{LT18Glass} for the case of derivative type $G(u,\partial_t u)=|\partial_t u|^p$ and in \cite{LT18ComNon} for the case of combined nonlinearity $G(u,\partial_t u)=|\partial_t u|^p+|u|^q$.
Finally, really recently the weakly coupled system of semilinear damped wave equations in the scattering case
\begin{align*}
\begin{cases}
u_{tt}-\Delta u +b_1(t)u_t = G_1(v,\partial_t v),  & x\in \mathbb{R}^n, \ t>0,  \\
v_{tt}-\Delta v +b_2(t) v_t = G_2(u,\partial_t u),  & x\in \mathbb{R}^n, \ t>0, \\
 (u,u_t,v,v_t)(0,x)= (\varepsilon u_0, \varepsilon u_1, \varepsilon v_0, \varepsilon v_1)(x), & x\in \mathbb{R}^n,
\end{cases}
\end{align*} has been considered in \cite{PalTak19} for the case with power nonlinearities $G_1(v,\partial_t v)=|v|^p,G_2(u,\partial_t u)=|u|^q$ and in \cite{PalTak19dt} for the  case with semilinear terms of derivative type $G_1(v,\partial_t v)=|\partial_t v|^p,G_2(u,\partial_t u)=|\partial_t u|^q$.

In this paper our approach is based on the following methods: in the subcritical case we employ two multipliers, that are introduced in \cite{LT18Scatt}, in order to apply a standard iteration argument based on lower bound estimates for the spatial integrals of the nonlinear terms and on a coupled system of ordinary integral inequalities; in the critical case, we modify the approach introduced by Wakasa-Yordanov in \cite{WakYor18,WakYor18damp} and adapted to weakly coupled systems in \cite{PalTak19} with the purpose to deal with the nonlinear term of derivative type. We underline that in the case with time-dependent coefficients for the damping terms in the scattering case, we may not apply the revised test function method which is introduced by Ikeda-Sobajima-Wakasa in \cite{ISW18} for semilinear wave models. Furthermore, in the critical case, where we consider the not-damped case as in Section 9 of  \cite{ISW18}, it is interesting to compare how our different approach leads to different upper bound estimates for the lifespan and in some cases to an improvement of these estimates. We refer to \cite{IS18} and to  \cite{IS17,ISW18,PT18,Pal19} for further details on this revised test function method based on a family of self similar solutions of the adjoint linear equation involving Gauss hypergeometric functions, for the study of semilinear heat, Schr\"odinger and damped wave equations and for the treatment of semilinear and scale-invariant model with time-dependent coefficients, respectively.

Before stating the blow-up results of this paper, let us introduce a suitable notion of energy solutions.

\begin{defn} \label{def energ sol intro} Let $u_0,v_0\in H^1(\mathbb{R}^n)$ and $u_1,v_1\in L^2(\mathbb{R}^n)$.
We say that $(u,v)$ is an energy solution of \eqref{weakly coupled system} on $[0,T)$ if 
\begin{align*}
& u\in \mathcal{C}([0,T),H^1(\mathbb{R}^n))\cap \mathcal{C}^1([0,T),L^2(\mathbb{R}^n))\quad \mbox{and} \quad  \partial_t u\in L^p_{\loc}([0,T)\times\mathbb{R}^n), \\
& v\in \mathcal{C}([0,T),H^1(\mathbb{R}^n))\cap \mathcal{C}^1([0,T),L^2(\mathbb{R}^n)) \quad \mbox{and} \quad  v\in  L^q_{\loc}([0,T)\times\mathbb{R}^n)
\end{align*}
satisfy $u(0,x)=\varepsilon u_0(x)$, $v(0,x)=\varepsilon v_0(x)$ in $H^1(\mathbb{R}^n)$ and the equalities
\begin{align} 
\int_{\mathbb{R}^n} & \partial_t u(t,x)\phi(t,x)\,dx-\int_{\mathbb{R}^n}\varepsilon u_1(x)\phi(0,x)\,dx - \int_0^t\int_{\mathbb{R}^n} \partial_t u(s,x)\phi_s(s,x) \,dx\, ds\notag \\
& \ +\int_0^t\int_{\mathbb{R}^n}\nabla u(s,x)\cdot\nabla\phi(s,x)\, dx\, ds+\int_0^t\int_{\mathbb{R}^n}b_1(s)\partial_t u(s,x) \phi(s,x)\,dx\, ds  \notag \\
& =\int_0^t \int_{\mathbb{R}^n}|v(s,x)|^q\phi(s,x)\,dx \, ds  \label{def u}
\end{align} and 
\begin{align} 
\int_{\mathbb{R}^n} & \partial_t v(t,x)\psi(t,x)\,dx-\int_{\mathbb{R}^n}\varepsilon v_1(x)\psi(0,x)\,dx - \int_0^t\int_{\mathbb{R}^n} \partial_t v(s,x)\psi_s(s,x) \,dx\, ds\notag \\
& \ +\int_0^t\int_{\mathbb{R}^n}\nabla v(s,x)\cdot\nabla\psi(s,x)\, dx\, ds+\int_0^t\int_{\mathbb{R}^n} b_2(s) \partial_t v(s,x)\psi(s,x)\,dx\, ds  \notag \\
& =\int_0^t \int_{\mathbb{R}^n}|\partial_t u(s,x)|^p\psi(s,x)\,dx \, ds  \label{def v}
\end{align}
for any test functions $\phi,\psi \in \mathcal{C}_0^\infty([0,T)\times\mathbb{R}^n)$ and any $t\in [0,T)$.
\end{defn}

 Performing a further step of integrations by parts in \eqref{def u}, \eqref{def v} and letting $t\rightarrow T$, we find that $(u,v)$ fulfills the definition of weak solution to \eqref{weakly coupled system}.
 
Let us state the blow-up result for \eqref{weakly coupled system} in the subcritical case.

\begin{thm}\label{Thm blowup |vt|^p, |u|^q} Let $b_1,b_2$ be continuous, nonnegative and summable functions. Let us consider $p,q>1$ satisfying 
\begin{align}\label{critical exponent wave like case system}
\max\left\{\frac{q+1+p^{-1}}{pq-1},\frac{2+q^{-1}}{pq-1}\right\}> \frac{n-1}{2}\,.
\end{align} 

Assume that $u_0,v_0 \in H^1(\mathbb{R}^n)$ and $u_1,v_1\in  L^2(\mathbb{R}^n)$ are nonnegative and compactly supported in $B_R$ 
functions such that $u_1\not \equiv 0$ and $v_0\not \equiv 0$. 

Let $(u,v)$ be an energy solution of \eqref{weakly coupled system} with lifespan $T=T(\varepsilon)$ such that
\begin{align} \label{support condition solution}
\supp u, \, \supp v \subset\{(t,x)\in  [0,T)\times\mathbb{R}^n: |x|\leqslant t+R \}.
\end{align} Then, there exists a positive constant $\varepsilon_0=\varepsilon_0(u_0,u_1,v_0,v_1,n,p,q,b_1,b_2,R)$ such that for any $0<\varepsilon\leqslant\varepsilon_0$ the solution $(u,v)$ blows up in finite time. Moreover,
 the upper bound estimate for the lifespan
\begin{align} \label{lifespan upper bound estimate}
T(\varepsilon)\leqslant C \varepsilon ^{-\max\left\{\Theta_1(n,p,q),\Theta_2(n,p,q)\right\}^{-1}}
\end{align} holds, where C is an independent of $\varepsilon$, positive constant and 
\begin{align}\label{def Lambda(n,p,q) function}
\Theta_1(n,p,q)\doteq \frac{q+1+p^{-1}}{pq-1}-\frac{n-1}{2} \quad \mbox{and} \quad \Theta_2(n,p,q)\doteq\frac{2+q^{-1}}{pq-1}-\frac{n-1}{2} .
\end{align}
\end{thm}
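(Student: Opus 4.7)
The plan is to run an iteration argument in the spirit of \cite{LT18Scatt} and \cite{PalTak19}, combining two multipliers: an exponential factor $m_i(t):=\exp\bigl(\int_0^t b_i(s)\,ds\bigr)$ to eliminate the damping, together with a Yordanov--Zhang-type spatial weight that captures the wave propagation and is responsible for producing the Strauss/Glassey-type loss $(n-1)/2$ in \eqref{critical exponent wave like case system}. Introducing the functionals $F(t):=\int_{\mathbb{R}^n} u(t,x)\,dx$ and $G(t):=\int_{\mathbb{R}^n} v(t,x)\,dx$ (well defined thanks to \eqref{support condition solution}), together with weighted analogues obtained by inserting the spatial weight, testing \eqref{def u}--\eqref{def v} with suitable cutoffs and using the fact that $b_i\in L^1$ makes $m_i$ uniformly bounded on $[0,\infty)$, one deduces the identities
\[ (m_1 F')'(t) = m_1(t)\!\int_{\mathbb{R}^n} |v|^q\,dx, \qquad (m_2 G')'(t) = m_2(t)\!\int_{\mathbb{R}^n} |\partial_t u|^p\,dx. \]
A further integration in time, combined with a sharp Hölder-type estimate on $B_{t+R}$ (using the spatial weight to replace $n(q-1)$ and $n(p-1)$ by the wave-propagation exponents $(n-1)(q-1)/2$ and $(n-1)(p-1)/2$), delivers the coupled integral system
\[ F'(t)\geqslant C\varepsilon + C\!\int_0^t (s+R)^{-(n-1)(q-1)/2} G(s)^q\,ds,\qquad G(t)\geqslant C\varepsilon + C\!\int_0^t (t-s)(s+R)^{-(n-1)(p-1)/2} (F'(s))^p\,ds. \]
The seed bounds $F'(t)\geqslant C_1\varepsilon$ and $G(t)\geqslant C_0\varepsilon$ come from $u_1\not\equiv 0$ and $v_0\not\equiv 0$ together with the monotonicity of $m_1 F'$ and $m_2 G'$.

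Starting from either seed I iterate inductively, producing sequences of polynomial lower bounds $F'(t)\geqslant D_k (t-L_k)_+^{\alpha_k}$ and $G(t)\geqslant E_k (t-L_k)_+^{\beta_k}$ on slicing domains $t\geqslant L_k$ with $L_k\uparrow L_\infty<\infty$. The exponents satisfy coupled linear recursions of the type $\alpha_{k+1}=1+q\beta_k-\tfrac{(n-1)(q-1)}{2}$ and $\beta_{k+1}=2+p\alpha_k-\tfrac{(n-1)(p-1)}{2}$, whose joint fixed points encode exactly the critical curve in \eqref{critical exponent wave like case system}, while the constants $D_k,E_k$ carry powers of $\varepsilon$ growing like $(pq)^k$. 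Under the subcritical assumption the exponents $\alpha_k,\beta_k$ diverge geometrically (ratio $\sqrt{pq}$ per step), ultimately dominating the factor $(pq)^k|\log\varepsilon|$ and forcing $F'$ and $G$ to diverge in finite time. The two distinct seeds feed two independent chains whose respective blow-up times scale like $\varepsilon^{-1/\Theta_1}$ and $\varepsilon^{-1/\Theta_2}$; the sharper of the two delivers \eqref{lifespan upper bound estimate}.

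The main difficulty is the asymmetry between the two nonlinearities in the cross-coupled recursion: a step through $|v|^q$ (affecting $F'$) contributes only one time integration, while a step through $|\partial_t u|^p$ (affecting $G$) contributes two, producing an additional factor $t-s$ and a different shift in the exponent recursion. This asymmetry is exactly what produces the two inequivalent exponents $\Theta_1, \Theta_2$ in \eqref{def Lambda(n,p,q) function} from a single iteration mechanism. A secondary technical point is the compatibility of the two independent damping multipliers $m_1, m_2$ with the Yordanov--Zhang weight chosen for the sharp Hölder estimate; the slicing method of \cite{WakYor18} handles the restriction $t\geqslant L_k$ at each iteration step without destroying the polynomial $\varepsilon$-dependence of the blow-up time.
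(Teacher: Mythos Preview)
Your overall architecture — damping-absorbing multipliers, spatial averages, a Yordanov--Zhang weight, and a coupled iteration — matches the paper's. But there is a genuine gap in how you splice the ingredients together. You write the iteration frame for the \emph{unweighted} averages $F'=U'$ and $G=V$ with the exponents $(n-1)(q-1)/2$ and $(n-1)(p-1)/2$, claiming a ``sharp H\"older-type estimate'' produces these. It does not: H\"older on $B_{s+R}$ applied to $G(s)=\int v$ gives only $\int|v|^q\geqslant C(s+R)^{-n(q-1)}G(s)^q$. The $(n-1)/2$ gain requires the Yordanov--Zhang weight $\Psi(t,x)=e^{-t}\Phi(x)$, and it controls $\int|v|^q$ in terms of the \emph{weighted} functional $V_1=\int v\,\Psi$, not $G$. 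If your sharp frame were valid, the iteration seeded by $F'\geqslant C\varepsilon$ would yield a lifespan exponent $\tfrac{2q+1}{pq-1}-\tfrac{n-1}{2}$, strictly larger than $\Theta_2$ — already a signal that the frame is too strong.

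The paper separates the two roles. The iteration frame for $(U',V)$ uses the \emph{crude} powers $n(q-1)$, $n(p-1)$ (plain H\"older on the full ball), while the Yordanov--Zhang weight enters only \emph{once}, to manufacture the seeds. Concretely, one first proves $V_1(t)\gtrsim\varepsilon$ and, less obviously, $U_2(t)=\int\partial_t u\,\Psi\gtrsim\varepsilon$; the latter requires a separate differential-inequality argument (Lemma~3.3-type) since $\partial_t u$ has no a~priori sign, a point your outline does not address. From these one deduces $\int|v|^q\gtrsim\varepsilon^q(1+t)^{n-1-\frac{n-1}{2}q}$ and $\int|\partial_t u|^p\gtrsim\varepsilon^p(1+t)^{n-1-\frac{n-1}{2}p}$, which, plugged into the $U',V$ identities, give polynomial seeds carrying $\varepsilon^q$ and $\varepsilon^p$ rather than $\varepsilon$. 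The crude iteration frame then delivers exactly $\Theta_1,\Theta_2$. Your seeds $F'\geqslant C\varepsilon$, $G\geqslant C\varepsilon$ are too weak to compensate the crude frame, and your sharp frame is not available to compensate the weak seeds. Finally, no slicing is needed in the subcritical case; the paper reserves that device for the critical curve.
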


\begin{rem} The upper bound estimates \eqref{lifespan upper bound estimate} for the lifespan coincide with the ones for the case  $b_1=b_2=0$, for more details see also \cite[Section 9]{ISW18}.
\end{rem}

The main result in the critical and not-damped case is the following theorem.

\begin{thm}\label{Thm crit case}
Let $n\geqslant 2$ and $b_1=b_2=0$. Let us assume that  $p,q>1$ satisfy 
\begin{align}\label{critical exponent wave like case system crit}
\max\left\{\frac{q+1+p^{-1}}{pq-1},\frac{2+q^{-1}}{pq-1}\right\}= \frac{n-1}{2}\, ,
\end{align} 
Assume that $u_0,v_0 \in H^1(\mathbb{R}^n)$ and $u_1,v_1\in  L^2(\mathbb{R}^n)$ are nonnegative and compactly supported in $B_R$ 
functions such that $u_1\not \equiv 0$ and $v_0\not \equiv 0$. 
Let $(u,v)$ be a weak solution of 
\begin{align}\label{weakly coupled system crit}
\begin{cases}
u_{tt}-\Delta u  = |v|^q,  & x\in \mathbb{R}^n, \ t>0,  \\
v_{tt}-\Delta v  = |\partial_t u|^p,  & x\in \mathbb{R}^n, \ t>0, \\
 (u,u_t,v,v_t)(0,x)= \varepsilon ( u_0,  u_1, v_0,  v_1)(x) & x\in \mathbb{R}^n,
\end{cases}
\end{align} satisfying \eqref{support condition solution} with lifespan $T=T(\varepsilon)$ (cf. Definition \ref{def weak sol crit case}). 

Then, there exists a positive constant $\varepsilon_0=\varepsilon_0(u_0,u_1,v_0,v_1,n,p,q,R)$ such that for any $\varepsilon\in (0,\varepsilon_0]$ the solution $(u,v)$ blows up in finite time. Moreover,
 the upper bound estimates for the lifespan
\begin{align} \label{lifespan upper bound estimate crit}
T(\varepsilon)\leqslant \begin{cases} \exp\left( C \varepsilon^{-p(pq-1)}\right) \vphantom{\Big(}   & \mbox{if} \ \ \Theta_1(n,p,q)=0,  \\
\exp\left( C \varepsilon^{-q(pq-1)}\right) \vphantom{\Big(}  & \mbox{if} \ \ \Theta_2(n,p,q)=0,  \\
\exp\left(C \varepsilon ^{-\frac{q}{q+1}(pq-1)}\right) & \mbox{if} \ \ \Theta_1(n,p,q)=\Theta_2(n,p,q)=0,\end{cases}
\end{align} hold, where C is an independent of $\varepsilon$, positive constant.
\end{thm}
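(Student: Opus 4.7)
The approach I would take follows the Wakasa-Yordanov strategy employed in \cite{WakYor18,WakYor18damp} for the critical Strauss case of a single semilinear wave equation and extended to weakly coupled systems in \cite{PalTak19}. The essential refinement for the present result is to accommodate the derivative-type nonlinearity $|\partial_t u|^p$ in the second equation of \eqref{weakly coupled system crit}. I begin by fixing the Yordanov-Zhang test function $\phi_0(x)=\int_{\mathbb{S}^{n-1}}e^{x\cdot\omega}\,d\omega$, which satisfies $\Delta\phi_0=\phi_0$ and behaves like $|x|^{-(n-1)/2}e^{|x|}$ for large $|x|$, and introduce its time-exponential companion $\phi_1(t,x)=e^{-t}\phi_0(x)$, an exact solution of the adjoint wave equation $\partial_t^2\phi_1-\Delta\phi_1=0$.

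First, I would introduce the two auxiliary functionals
\[
H(t)=\int_{\mathbb{R}^n}v(t,x)\phi_1(t,x)\,dx,\qquad G(t)=\int_{\mathbb{R}^n}\partial_t u(t,x)\phi_1(t,x)\,dx+\int_{\mathbb{R}^n}u(t,x)\phi_1(t,x)\,dx,
\]
the first being a zero-order functional adapted to the source $|v|^q$ and the second a first-order functional adapted to the derivative-type source $|\partial_t u|^p$. Using the equations in \eqref{weakly coupled system crit} together with $\partial_t\phi_1=-\phi_1$ and $\Delta\phi_1=\phi_1$, and integrating by parts in space, I derive coupled ODE-type identities of the form $H''+2H'=\int|\partial_t u|^p\phi_1\,dx$ and an analogous identity for $G$ involving $\int|v|^q\phi_1\,dx$. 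H\"older's inequality, combined with the support condition \eqref{support condition solution} and the asymptotics of $\phi_1$ inside the light cone, then converts these identities into a closed system of coupled differential inequalities in $(H,G)$ whose algebraic weights in $(t+R)$ are determined by $p$, $q$, and $n$.

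Second, I would iterate this coupled system by the slicing method. Introducing a sequence $\ell_j=2-2^{-j}$, I would establish on each region $t\geqslant\ell_j R$ lower bounds of the form
\[
H(t)\geqslant A_j\bigl(\log(t/(\ell_j R))\bigr)^{a_j}(t+R)^{-\alpha},\qquad G(t)\geqslant B_j\bigl(\log(t/(\ell_j R))\bigr)^{b_j}(t+R)^{-\beta},
\]
with fixed algebraic exponents $\alpha,\beta$ dictated by the critical curve condition \eqref{critical exponent wave like case system crit}. The criticality \eqref{critical exponent wave like case system crit} is precisely what makes the algebraic factors reproduce themselves under iteration, so that only the logarithmic exponents $a_j,b_j$ grow, via coupled recurrence relations whose solutions behave like $(pq)^j$. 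Passing to the limit $j\to\infty$ and optimizing in $t$ then yields, case by case, the three upper bounds in \eqref{lifespan upper bound estimate crit}, the distinction depending on whether the leading divergence comes from the $H$ recursion (case $\Theta_2=0$), the $G$ recursion (case $\Theta_1=0$), or a balanced combination of both (case $\Theta_1=\Theta_2=0$), the last situation being responsible for the factor $q/(q+1)$ in the exponent.

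The main obstacle I anticipate is controlling the derivative-type functional $G$. Unlike the power-nonlinearity setting of \cite{PalTak19}, where the unknown $u$ itself produces the source and one starts the iteration from the free term $\varepsilon\int u_0\phi_1\,dx$, here one must force the initial lower bound $G(t)\geqslant c\varepsilon$ for $t$ of order $R$ through an additional time integration and a judicious choice of multiplier adapted from \cite{LT18Scatt,LT18Glass}. Once that first step is in place, the slicing iteration is formally analogous to \cite{PalTak19}, but the asymmetric roles of $H$ and $G$—one originating from a power and the other from a derivative nonlinearity—require careful bookkeeping of the logarithmic weights so that the three distinct lifespan bounds emerge cleanly from a single recursive scheme.
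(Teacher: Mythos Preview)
Your proposal has a genuine gap: the functionals $H=\int v\,\phi_1\,dx$ and $G=\int(\partial_t u+u)\,\phi_1\,dx$ built from the single test function $\phi_1(t,x)=e^{-t}\phi_0(x)$ are not the right objects for the critical iteration, and the scheme you outline will not close on the curve \eqref{critical exponent wave like case system crit}. These functionals are precisely $V_1$ and $U_1+U_2$ from Section~\ref{Section lower bounds nonlinear term} of the paper; there they serve only to produce the \emph{starting} lower bounds $\int|v|^q\,dx\gtrsim\varepsilon^q(1+t)^{n-1-\frac{n-1}{2}q}$ and $\int|\partial_t u|^p\,dx\gtrsim\varepsilon^p(1+t)^{n-1-\frac{n-1}{2}p}$. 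If you try to iterate them, H\"older gives
\[
\int|v|^q\phi_1\,dx\gtrsim H^q(1+t)^{-\frac{n-1}{2}(q-1)},\qquad \int|\partial_t u|^p\phi_1\,dx\gtrsim U_2^p(1+t)^{-\frac{n-1}{2}(p-1)},
\]
and a full cycle $U_2\to H\to U_2$ produces an integrand whose $s$-exponent is $q-\tfrac{n-1}{2}(pq-1)$; on $\Theta_1(n,p,q)=0$ this equals $-1-\tfrac{1}{p}<-1$, so the integral \emph{converges} and no logarithm appears. The same failure occurs on $\Theta_2=0$. (There is also a secondary issue: your $G$ satisfies the clean identity $G'=\int|v|^q\phi_1\,dx$, but the H\"older bound on $\int|\partial_t u|^p\phi_1\,dx$ needs $U_2=\int\partial_t u\,\phi_1\,dx$ alone, not $G=U_1+U_2$.)

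What the Wakasa--Yordanov machinery in \cite{WakYor18,WakYor18damp,PalTak19}---and the paper's actual proof---really uses is not $\phi_1$ but the $\lambda$-averaged weights
\[
\eta_r(t,s,x)=\int_0^{\lambda_0}e^{-\lambda(R+t)}\frac{\sinh\lambda(t-s)}{\lambda(t-s)}\,\Phi(\lambda x)\,\lambda^r\,d\lambda,\qquad \xi_r(t,s,x)=\int_0^{\lambda_0}e^{-\lambda(R+t)}\cosh\lambda(t-s)\,\Phi(\lambda x)\,\lambda^r\,d\lambda,
\]
and the functionals $\mathcal{U}(t)=\int\partial_t u\,\eta_{r_1}(t,t,x)\,dx$, $\mathcal{V}(t)=\int v\,\eta_{r_2}(t,t,x)\,dx$. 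The decisive estimate is $\eta_r(t,t,x)\lesssim\langle t\rangle^{-\frac{n-1}{2}}\langle t-|x|\rangle^{\frac{n-3}{2}-r}$: choosing $r_1=\tfrac{n-1}{2}-\tfrac{1}{p}$ makes $\big(\tfrac{n-3}{2}-r_1\big)p'=-1$, so the H\"older step yields $\int_{B_{R+s}}\eta_{r_1}(s,s,x)^{p'}\,\eta_{r_2}(t,s,x)^{-p'/p}\,dx\lesssim\langle s\rangle^{\ldots}\log\langle s\rangle$. It is this light-cone weight $\langle t-|x|\rangle^{\frac{n-3}{2}-r}$---absent from $\phi_1$---that manufactures the logarithm inside the iteration frame, after which the slicing method with $\ell_j=2-2^{-j}$ proceeds essentially as you describe. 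So your overall architecture (coupled integral inequalities, slicing, $(pq)^j$ growth of logarithmic exponents) is correct, but the choice of weight is the whole point in the critical case and your $\phi_1$ will not do the job.
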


The remaining part of this paper is organized as follows: in Section \ref{Section iteration frame} we derive the coupled system of ODIs (ordinary differential inequalities) that the spatial averages of the components of a local solution has to satisfy, then, using a suitable pair of multipliers $(m_1,m_1)$ (cf. \eqref{def multipliers} below) we derive the corresponding integral iteration frame from this system of ODIs; in Section \ref{Section lower bounds nonlinear term} we prove suitable lower bounds for the space integrals of the nonlinearities, that is, for $\|\partial_t u(t,\cdot)\|_{L^p(\mathbb{R}^n)}^p,\|v(t,\cdot)\|_{L^q(\mathbb{R}^n)}^q$; hence, in Section \ref{Section iteration procedure} we combine the results from Sections \ref{Section iteration frame}-\ref{Section lower bounds nonlinear term} in an iterative procedure which allows us to determine a sequence of lower bound estimates for the above cited spatial averages; finally, in Section \ref{Section end of the proof of the Theorem subcritical case} we conclude the proof of Theorem \ref{Thm blowup |vt|^p, |u|^q} proving the blow-up result thanks to the sequence of lower bounds obtained via the iteration argument and deriving the upper bound for the lifespan of a local solution. Finally, in Section \ref{Section critical case} we prove Theorem \ref{Thm crit case}. The intermediate steps are similar to the ones for the subcritical case: derivation of the iteration frame, lower bound estimates for integrals related to the nonlinear terms, yet containing a logarithmic factor, and iteration procedure combined with the slicing method. Nevertheless, a crucial difference consists in the choice of the functionals, whose blow-up dynamic is considered. Indeed, differently from the subcritical case, we do not consider  spatial averages of the components of a local solution rather weighted spatial averages of this components.

\subsection*{Notations} Throughout this paper we will use the following notations: $B_R$ denotes the ball around the origin with radius $R$; $f \lesssim g$ means that there exists a positive constant $C$ such that $f \leqslant Cg$ and, analogously, for $f\gtrsim g$; moreover, $f\asymp g$ means $f\lesssim g$ and $f\gtrsim g$; finally, as in the introduction, $p_{\Str}(n)$ and $p_{\Gla}(n)$ denote the Strauss exponent and the Glassey exponent, respectively.

\section{Iteration frame} \label{Section iteration frame}

Let us recall the definition of some multipliers related to our model, which have been introduced in \cite{LT18Scatt}, and some properties of them, that we will employ throughout the remaining sections.
 
\begin{defn} Let $b_1,b_2\in \mathcal{C}([0,\infty))\cap L^1([0,\infty))$ be the nonnegative, time-dependent coefficients in \eqref{weakly coupled system}. We define the \emph{multipliers}
\begin{align}
m_j(t)\doteq \exp \bigg(-\int_t^\infty b_j(\tau) d\tau \bigg) \qquad \mbox{for} \ \  t\geqslant 0 \ \ \mbox{and} \ j=1,2.  \label{def multipliers}
\end{align}
\end{defn}

As $b_1,b_2$ are nonnegative functions, then, $m_1,m_2$ are increasing functions. Moreover, due to the summability of $b_1,b_2$, the multipliers are bounded and 
\begin{align}
m_j(0)\leqslant m_j(t) \leqslant 1 \qquad \mbox{for} \ \  t\geqslant 0 \ \ \mbox{and} \ j=1,2. \label{boundedness multipliers}
\end{align}
Finally, a remarkable property of these multipliers is the following one:
\begin{align}
m_j'(t) = b_j(t) \, m(t)  \qquad \mbox{for} \ \   j=1,2. \label{derivative multiplier}
\end{align} The properties given in \eqref{boundedness multipliers} and \eqref{derivative multiplier} are essential in order to handle and somehow to ``neglect'' the damping term.

Henceforth, we assume that $u_0,u_1,v_0,v_1$ satisfy the assumptions of Theorem \ref{Thm blowup |vt|^p, |u|^q}. Let $(u,v)$ be an energy solution of \eqref{weakly coupled system} on $[0,T)$ in the sense of Definition \ref{def energ sol intro}. Then, we introduce the following pair of functionals
\begin{align}
U(t)\doteq \int_{\mathbb{R}^n} u(t,x) \, dx, \quad V(t)\doteq \int_{\mathbb{R}^n} v(t,x) \, dx. \label{def U,V}
\end{align}
Let us point out that the pair of functionals whose dynamic will investigated in Section \ref{Section iteration procedure} is actually $(U',V)$ due the nonlinearity in \eqref{weakly coupled system}.

The support condition \eqref{support condition solution} can be rewritten as $$\supp u(t,\cdot),\, \supp v(t,\cdot)\subset B_{R+t}\quad \mbox{for any} \  t\geqslant 0.$$
Therefore, using Green's identity, it results that $U,V$ satisfy
\begin{align}
U''(t)+b_1(t)U'(t)&=\int_{\mathbb{R}^n}|v(t,x)|^q\, dx, \label{ODE U} \\
V''(t)+b_2(t)V'(t)&=\int_{\mathbb{R}^n}|\partial_t u(t,x)|^p\, dx. \label{ODE V}
\end{align}

Let us derive first  integral lower bound estimates for $V$ from \eqref{ODE V}.
Multiplying both sides of \eqref{ODE V} by $m_2$ and using \eqref{derivative multiplier}, we get 
\begin{align*}
m_2(t)V''(t)+m_2(t)b_2(t)V'(t)= \frac{d}{dt} \big(m_2(t) V'(t)\big)=m_2(t) \int_{\mathbb{R}^n} |\partial_t u(t,x)|^p\, dx.
\end{align*} Hence, integrating over$[0,t]$  the last relation and rearranging the resulting equation, we have
\begin{align*}
V'(t) &=\frac{m_2(0)}{m_2(t)}V'(0)+\int_0^t \frac{m_2(s)}{m_2(t)} \int_{\mathbb{R}^n} |\partial_t u(s,x)|^p\, dx\, ds \\
&\geqslant m_2(0) V'(0)+ m_2(0)\int_0^t \int_{\mathbb{R}^n} |\partial_t u(s,x)|^p\, dx\, ds,
\end{align*} where in the last step we used \eqref{boundedness multipliers}. A further integration over $[0,t]$ provides
\begin{align}
V(t) & \geqslant V(0)+ m_2(0) V'(0) t+ m_2(0)\int_0^t \int_0^s \int_{\mathbb{R}^n} |\partial_t u(\tau,x)|^p\, dx \, d\tau\, ds \qquad \mbox{for any} \ t\geqslant 0. \label{ODI U, source |vt|^p}
\end{align} Using again the support property for $u_t(t,\cdot)$ and H\"older's inequality, we find that \eqref{ODI U, source |vt|^p} implies
\begin{align}
V(t) & \geqslant C \int_0^t \int_0^s (1+\tau)^{-n(p-1)} (U'(\tau))^p \, d\tau\, ds \qquad \mbox{for any} \ t\geqslant 0. \label{ODI U, coupled with V}
\end{align}
for a suitable positive constant $C=C(n,p,b_2,R)$.

Proceeding in a similar way, we derive now two lower bound estimates for $U'$. A multiplication by $m_1$ in \eqref{ODE U} and a successive integration over $[0,t]$ lead to  
\begin{align*}
U'(t) &=\frac{m_1(0)}{m_1(t)}U'(0)+\int_0^t \frac{m_1(s)}{m_1(t)} \int_{\mathbb{R}^n} |v(s,x)|^q\, dx\, ds .
\end{align*}
Employing again \eqref{boundedness multipliers}, from the last estimate  we derive
\begin{align}
U'(t) &\geqslant m_1(0) U'(0)+ m_1(0)\int_0^t \int_{\mathbb{R}^n} |v(s,x)|^q\, dx\, ds. \label{ODI V, source |u|^q}
\end{align} Finally, thanks to the support condition for $v(t,\cdot)$, by H\"older's inequality we find
\begin{align}
U'(t) & \geqslant K \int_0^t (1+s)^{-n(q-1)} (V(s))^q \, ds \qquad \mbox{for any} \ t\geqslant 0. \label{ODI V, coupled with U}
\end{align} for a suitable positive constant $K=K(n,q,b_1,R)$.

In Section \ref{Section iteration procedure} we employ \eqref{ODI U, coupled with V} and \eqref{ODI V, coupled with U} as iteration scheme. However, in order to start with the iteration procedure we need to derive lower bound estimates for the integral nonlinear terms, so that, plugging these lower bounds in \eqref{ODI U, source |vt|^p} and \eqref{ODI V, source |u|^q} we get the first step of the iterative procedure. We will complete this task in the next section.

\section{Lower bounds for the spatial integral of the nonlinearities} \label{Section lower bounds nonlinear term}

As we have already announced the goal of this section is to determine lower bound estimates for the integrals of the semilinear terms. According to this purpose, we need to take into account the analysis of further auxiliary functionals related to the local solution $(u,v)$ of \eqref{weakly coupled system}. More specifically, we are going to estimate the functionals
\begin{align}
U_1(t)& \doteq \int_{\mathbb{R}^n}u(t,x)\Psi(t,x)\, dx,  \label{def U1} \\  
V_1(t)& \doteq \int_{\mathbb{R}^n}v(t,x)\Psi(t,x)\, dx , \label{def V1} \\
U_2(t)& \doteq \int_{\mathbb{R}^n} \partial_t u(t,x)\Psi(t,x)\, dx . \label{def V2} 
\end{align} In the definition of the functionals $U_1,V_1,U_2$ we used the function $\Psi=\Psi(t,x)\doteq e^{-t} \Phi(x)$,  where 
\begin{align}
\Phi=\Phi(x)\doteq \begin{cases} e^{x}+e^{-x} & \mbox{for} \ \ n=1, \\\displaystyle{\int_{\mathbb{S}^{n-1}} \, e^{\omega \cdot x}\, dS_{\omega}} & \mbox{for} \ \ n\geqslant 2\end{cases} \label{def eigenfunction laplace op}
\end{align} is an eigenfunction of the Laplace operator, as $\Delta \Phi =\Phi$. The test function $\Psi$ has been introduce for the first time in \cite{YZ06} in the study of the blow-up result for the semilinear classical wave equation with power nonlinearity in the critical case for high space dimension.

\begin{lem} \label{lemma U1,V1} Let $(w,\widetilde{w})$ be a local energy solution of the Cauchy problem
\begin{align*}
\begin{cases}
w_{tt}-\Delta w +b_1(t)w_t = G_1(t,x,w,w_t,\widetilde{w},\widetilde{w}_t),  & x\in \mathbb{R}^n, \ t\in (0,T),  \\
\widetilde{w}_{tt}-\Delta \widetilde{w} +b_2(t) \widetilde{w}_t = G_2(t,x,w,w_t,\widetilde{w},\widetilde{w}_t),  & x\in \mathbb{R}^n, \ t\in (0,T), \\
 (w,w_t,\widetilde{w},\widetilde{w}_t)(0,x)= (\varepsilon w_0, \varepsilon w_1, \varepsilon \widetilde{w}_0, \varepsilon \widetilde{w}_1)(x), & x\in \mathbb{R}^n,
\end{cases}
\end{align*} where the time-dependent coefficients of the damping terms $b_1,b_2\in \mathcal{C}([0,\infty))\cap L^1([0,\infty))$ and the nonlinear terms $G_1,G_2$ are nonnegative. Furthermore, we assume that $w_0,w_1,\widetilde{w}_0,\widetilde{w}_1$ are nonnegative, nontrivial and compactly supported and  that  $w, \widetilde{w}$ satisfy a support condition as in \eqref{support condition solution}.
 Let $W_1,\widetilde{W}_1$ be defined by $$ W_1(t) \doteq \int_{\mathbb{R}^n}w(t,x)\Psi(t,x)\, dx  \quad \mbox{and} \quad
\widetilde{W}_1(t) \doteq \int_{\mathbb{R}^n}\widetilde{w}(t,x)\Psi(t,x)\, dx  $$ for any $t\geqslant 0$. Then, for any $t\geqslant 0$ the following estimates hold
\begin{align*}
W_1(t) & \geqslant \varepsilon \,\tfrac{m_1(0)}{2}  \int_{\mathbb{R}^n}w_0(x) \Phi(x) \, dx \quad \mbox{and} \quad
\widetilde{W}_1(t)  \geqslant \varepsilon \,\tfrac{m_2(0)}{2} \int_{\mathbb{R}^n}\widetilde{w}_0(x) \Phi(x) \, dx. 
\end{align*}
\end{lem}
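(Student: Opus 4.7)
The plan is to establish the estimate for $W_1$; the one for $\widetilde{W}_1$ follows by the analogous argument with $(w,b_1,m_1,G_1)$ replaced by $(\widetilde{w},b_2,m_2,G_2)$. My strategy is to multiply the equation for $w$ by the weight $m_1(s)\Psi(s,x)$ and use the three identities $\Psi_s=-\Psi$, $\Delta\Psi=\Psi$ and $m_1'=b_1 m_1$ to cast the resulting relation into divergence-in-time form. A direct differentiation gives
\begin{align*}
\partial_s\bigl[m_1(s)\Psi(s,x)(w_s+w)\bigr] = m_1\Psi\,w_{ss} + b_1 m_1\Psi\,w_s + (b_1-1)m_1\Psi\,w;
\end{align*}
inserting the equation $w_{ss}-\Delta w + b_1 w_s = G_1$, integrating in $x$ (boundary terms vanish by the support assumption, and $\int\Psi\,\Delta w\,dx = W_1$), and using $\int(w_s+w)\Psi\,dx = W_1'+2W_1$, I would obtain the pointwise-in-time identity
\begin{align*}
\frac{d}{ds}\bigl[m_1(s)(W_1'(s)+2W_1(s))\bigr] = m_1(s)\!\int_{\mathbb{R}^n} G_1\Psi\,dx + b_1(s)m_1(s)W_1(s).
\end{align*}

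Next I would argue that $W_1\geqslant 0$ on $[0,T)$ via a continuity argument. Since $w_0\not\equiv 0$ gives $W_1(0)=\varepsilon\int w_0\Phi\,dx>0$, if $W_1$ ever became negative there would be a first zero $t^*\in(0,T)$ with $W_1\geqslant 0$ on $[0,t^*]$. On this interval the right-hand side of the above identity is nonnegative, so $s\mapsto m_1(s)(W_1'(s)+2W_1(s))$ is nondecreasing and, evaluating at $t^*$ where $W_1(t^*)=0$,
\begin{align*}
m_1(t^*)W_1'(t^*) \geqslant m_1(0)\bigl(W_1'(0)+2W_1(0)\bigr) = m_1(0)\varepsilon\!\int_{\mathbb{R}^n}(w_0+w_1)\Phi\,dx > 0,
\end{align*}
forcing $W_1'(t^*)>0$; this contradicts the fact that $W_1$ attains its minimum at $t^*$, which gives $W_1'(t^*)\leqslant 0$. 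With the nonnegativity secured, the same monotonicity applied at an arbitrary $t$, combined with $m_1(t)\leqslant 1$ and $w_1\geqslant 0$, yields $W_1'(t)+2W_1(t)\geqslant C$, where $C\doteq m_1(0)\varepsilon\int w_0\Phi\,dx$.

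Finally, I would solve this first-order ODI by multiplying by $e^{2t}$ and integrating from $0$ to $t$, which yields
\begin{align*}
W_1(t) \geqslant e^{-2t}\bigl(W_1(0)-\tfrac{C}{2}\bigr) + \tfrac{C}{2}.
\end{align*}
Since $W_1(0)=\varepsilon\int w_0\Phi\,dx$ and $m_1(0)\leqslant 1$ imply $W_1(0)\geqslant C/2$, the exponential factor multiplies a nonnegative quantity and the announced bound $W_1(t)\geqslant \tfrac{m_1(0)}{2}\varepsilon\int w_0\Phi\,dx$ holds for every $t\in[0,T)$. The main technical point I foresee is recognising the right divergence structure in the first step: the multiplier $m_1(s)\Psi(s,x)$ is engineered precisely so that the damping term $b_1 w_s$ is absorbed into $\partial_s[m_1\Psi(w_s+w)]$, leaving only the residual $b_1 m_1 W_1$ on the right-hand side which, once the nonnegativity of $W_1$ is established by the continuity argument, can be simply dropped.
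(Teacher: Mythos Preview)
Your argument is correct. The paper does not give a proof but simply refers to Lemma 2.2 in \cite{PalTak19dt}; nonetheless your approach is entirely consistent with the machinery the paper uses in the subsequent lemma for $U_2$, where exactly the same divergence identity
\[
\frac{d}{dt}\Big[m_1(t)\int_{\mathbb{R}^n}(\partial_t u+u)\Psi\,dx\Big]=b_1(t)m_1(t)U_1(t)+m_1(t)\int_{\mathbb{R}^n}|v|^q\Psi\,dx
\]
is derived (cf.\ \eqref{inter 5}). There the nonnegativity of $U_1$ is taken from the present lemma; what you add is the continuity/first-zero argument that makes the nonnegativity of $W_1$ self-contained rather than imported. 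One small point of rigour: you manipulate the strong equation $w_{ss}-\Delta w+b_1w_s=G_1$, whereas the solution is only assumed to be an energy solution as in Definition~\ref{def energ sol intro}. To be fully in line with the framework of the paper you should start from the integral identity with test function $\Psi$ and differentiate in $t$, exactly as is done in the derivation of \eqref{inter a}; this yields precisely your identity without requiring $w_{ss}$ to exist pointwise. The remaining steps (the contradiction at the first zero via $W_1'(t^*)\leqslant 0$, the Gr\"onwall step with the $e^{2t}$ integrating factor, and the observation $W_1(0)\geqslant C/2$ from $m_1(0)\leqslant 1$) are all correct as written.
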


\begin{proof} See Lemma 2.2 in \cite{PalTak19dt}.
\end{proof}

In particular, from Lemma \ref{lemma U1,V1} we get immediately the lower bound estimates
\begin{align}
\label{lower bound U1}  U_1(t)&\geqslant \varepsilon I_1[u_0] \qquad \mbox{for any} \ t\geqslant 0 ,\\
\label{lower bound V1} V_1(t)&\geqslant \varepsilon I_2[v_0] \qquad \mbox{for any} \ t\geqslant 0 ,
\end{align} where $I_j[f]\doteq \frac{m_j(0)}{2}\int_{\mathbb{R}^n}f(x)\Phi(x)\, dx$ for $j=1,2$.

In the next step we follow the main ideas of \cite[Section 3]{LT18Glass} and \cite[Section 4]{LT18ComNon} in order to control the functional $U_2$ from below.

\begin{lem} Let $U_2$ be defined by \eqref{def V2}. Under the same assumptions of Theorem \ref{Thm blowup |vt|^p, |u|^q}, the following estimate holds
\begin{align}
\label{lower bound V2}
U_2(t)&\geqslant \varepsilon I_1[u_1] \qquad \mbox{for any} \ t\geqslant 0.
\end{align}
\end{lem}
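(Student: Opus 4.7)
The plan is to test the $u$-equation against the weight $\Psi$, extract a first-order linear ODE for $U_2$ whose source contains only nonnegative terms, and then absorb the damping with the multiplier $m_1$ so that monotonicity of an appropriate auxiliary functional gives the bound.

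First I would multiply the equation $u_{tt}-\Delta u + b_1(t) u_t = |v|^q$ by $\Psi(t,x)=e^{-t}\Phi(x)$ and integrate over $\mathbb{R}^n$. The compact support condition \eqref{support condition solution} justifies the two spatial integrations by parts, and the eigenfunction identity $\Delta\Phi=\Phi$ turns the Laplacian term into $U_1(t)$. Using $\Psi_t=-\Psi$ to relate $\int u_{tt}\Psi\,dx$ to $U_2'(t)+U_2(t)$, I obtain the scalar identity
\begin{align*}
U_2'(t)+(1+b_1(t))\,U_2(t)=U_1(t)+\int_{\mathbb{R}^n}|v(t,x)|^q\,\Psi(t,x)\,dx.
\end{align*}

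Next I would exploit the multiplier $m_1$. Because $m_1'(t)=b_1(t)m_1(t)$, multiplying the above identity by $e^{t}m_1(t)$ produces the clean divergence form
\begin{align*}
\frac{d}{dt}\bigl(e^{t}m_1(t)\,U_2(t)\bigr)=e^{t}m_1(t)\Bigl(U_1(t)+\int_{\mathbb{R}^n}|v(t,x)|^q\,\Psi(t,x)\,dx\Bigr).
\end{align*}
Now the key structural point: both summands on the right-hand side are nonnegative. For the nonlinear term this is automatic, and for $U_1(t)$ it follows from Lemma \ref{lemma U1,V1} (which gives $U_1(t)\geqslant\varepsilon I_1[u_0]\geqslant 0$ since $u_0\geqslant 0$). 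Therefore $e^{t}m_1(t)U_2(t)$ is nondecreasing on $[0,T)$, and integrating from $0$ to $t$ together with the initial value $U_2(0)=\varepsilon\int_{\mathbb{R}^n}u_1(x)\Phi(x)\,dx$ furnishes the required pointwise lower bound once one invokes $m_1(0)\leqslant m_1(t)\leqslant 1$ and the definition $I_1[u_1]=\tfrac{m_1(0)}{2}\int u_1\Phi\,dx$.

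The step I expect to require the most care is the interplay between the damping exponential $m_1$ and the time weight $e^{-t}$ built into $\Psi$: the integrating factor has to be chosen exactly as $e^{t}m_1(t)$ so that the two cancellations produced by $\Psi_t=-\Psi$ and by $m_1'=b_1 m_1$ line up with the first-order coefficient $1+b_1(t)$ that appears in the ODE for $U_2$. Any other combination either leaves an uncontrollable $b_1(t)U_2$ remainder, or loses the nonnegative sign on the source side and hence destroys the monotonicity that delivers a lower bound uniform in $t$. Aside from this algebraic bookkeeping, the proof is routine: the compact support of $u(t,\cdot)$ handles the boundary terms in the integrations by parts, and no estimate beyond Lemma \ref{lemma U1,V1} is needed for the inhomogeneous part.
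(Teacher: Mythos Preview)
Your derivation of the first-order identity
\[
U_2'(t)+(1+b_1(t))\,U_2(t)=U_1(t)+\int_{\mathbb{R}^n}|v(t,x)|^q\,\Psi(t,x)\,dx
\]
and of the integrating factor $e^{t}m_1(t)$ is correct, but the conclusion you draw from the monotonicity of $e^{t}m_1(t)U_2(t)$ is not. Integrating only gives
\[
e^{t}m_1(t)\,U_2(t)\geqslant m_1(0)\,U_2(0)=\varepsilon\, m_1(0)\int_{\mathbb{R}^n}u_1\Phi\,dx,
\]
hence
\[
U_2(t)\geqslant \frac{m_1(0)}{e^{t}m_1(t)}\,\varepsilon\int_{\mathbb{R}^n}u_1\Phi\,dx.
\]
Since $m_1(t)\to 1$, the factor $e^{t}m_1(t)$ tends to infinity, so this lower bound decays like $e^{-t}$ and cannot dominate the constant $\varepsilon I_1[u_1]$ beyond $t\approx\log 2$. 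Invoking $m_1(0)\leqslant m_1(t)\leqslant 1$ only makes matters worse. In the special case $u_0\equiv 0$ (which the hypotheses allow, as only $u_1\not\equiv 0$ is assumed) the source $U_1$ is merely nonnegative, so dropping it costs nothing and you genuinely cannot recover a uniform bound this way.

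What is missing is exactly the mechanism that produces a \emph{constant} on the right-hand side of the differential inequality rather than just a nonnegative term. The paper achieves this by first integrating an identity for $m_1(t)\int(\partial_t u+u)\Psi\,dx$ to obtain the lower bound \eqref{inter 7}, which carries the constant $\varepsilon\,m_1(0)\int(u_0+u_1)\Phi\,dx$, and then \emph{adding} this integrated estimate to the differential identity \eqref{inter 9}. The sum \eqref{inter 12} is an inequality of the form $(m_1 U_2)'+2m_1 U_2\geqslant \varepsilon\,m_1(0)\int(u_0+u_1)\Phi+(\text{nonneg.})$, and subtracting half of the constant defines the auxiliary functional $U_3$ with $U_3'+2U_3\geqslant 0$ and $U_3(0)\geqslant 0$, whence $U_3(t)\geqslant 0$ --- a statement with no $e^{-t}$ loss. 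Your integrating-factor idea is the right instinct, but it has to be applied to this shifted functional, not to $U_2$ itself.
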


\begin{proof}
Let us begin  pointing out that 
\begin{align}
\frac{d}{dt}& \bigg( m_1(t) \int_{\mathbb{R}^n}\big(\partial_t u(t,x)+u(t,x)\big) \Psi(t,x) \, dx \bigg) \notag\\ &= b_1(t) m_1(t) \int_{\mathbb{R}^n}\big(\partial_t u(t,x)+u(t,x)\big) \Psi(t,x) \, dx +m_1(t) \frac{d}{dt}  \int_{\mathbb{R}^n}\big(\partial_t u(t,x)+u(t,x)\big) \Psi(t,x) \, dx. \label{inter 1}
\end{align} 
Choosing $\psi\equiv \Psi$ in \eqref{def v}, we have
\begin{align*}
\int_{\mathbb{R}^n} & \partial_t u(t,x)\Psi(t,x)\,dx-\int_{\mathbb{R}^n}\varepsilon u_1(x)\Phi(x)\,dx - \int_0^t\int_{\mathbb{R}^n} \partial_t u(s,x)\Psi_s(s,x) \,dx\, ds\notag \\
&  +\int_0^t\int_{\mathbb{R}^n}\nabla u(s,x)\cdot\nabla\Psi(s,x)\, dx\, ds+\int_0^t\int_{\mathbb{R}^n}b_1(s)\partial_t u(s,x) \Psi(s,x)\,dx\, ds  =\int_0^t \int_{\mathbb{R}^n}|v(s,x)|^q\Psi(s,x)\,dx \, ds.
\end{align*} Differentiating both sides of the previous equality with respect to $t$, we arrive at
\begin{align}
\int_{\mathbb{R}^n}|v(t,x)|^q\Psi(t,x)\,dx & = \frac{d}{dt} \int_{\mathbb{R}^n} \partial_t u(t,x)\Psi(t,x)\, dx \notag \\ & \quad + \int_{\mathbb{R}^n} \big( - \partial_t u(t,x)\Psi_t(t,x)+\nabla u(t,x)\cdot\nabla\Psi(t,x) + b_1(t)\partial_t u(t,x) \Psi(t,x)\big)  \,dx . \label{inter a} 
\end{align} Using $\Delta \Psi=\Psi$ and $\Psi_t=-\Psi$, \eqref{inter a} yields
\begin{align}
\int_{\mathbb{R}^n}|v(t,x)|^q\Psi(t,x)\,dx &  = \frac{d}{dt} \int_{\mathbb{R}^n} \big( \partial_t u(t,x)+u(t,x)\big)\Psi(t,x)\, dx +b_1(t) \int_{\mathbb{R}^n} \partial_t u(t,x) \Psi(t,x) \,dx. \label{inter 3}
\end{align} If we combine \eqref{inter 1} and \eqref{inter 3}, we obtain 
\begin{align}
\frac{d}{dt} \bigg(  m_1(t) \int_{\mathbb{R}^n}\big(\partial_t u(t,x)& +u(t,x)\big) \Psi(t,x) \, dx \bigg)= b_1(t) m_1(t)\, U_1(t)  + m_1(t) \int_{\mathbb{R}^n}|v(t,x)|^q\Psi(t,x)\,dx , 
\label{inter 5}
\end{align} where $U_1$ is defined by \eqref{def U1}. 

\noindent
Thanks to \eqref{lower bound U1} we have that $U_1$ is nonnegative. Then, integrating \eqref{inter 5} over $[0,t]$, we get the estimate
\begin{align}
m_1(t) \int_{\mathbb{R}^n}\big(\partial_t u(t,x) &  +u(t,x)\big) \Psi(t,x) \, dx  \notag\\ & \geqslant \varepsilon \, m_1(t) \int_{\mathbb{R}^n}\big(u_0(x) +u_1(x)\big) \Phi(x) \, dx  + \int_0^t m_1(s) \int_{\mathbb{R}^n}|v(s,x)|^q\Psi(s,x)\,dx. \label{inter 7}
\end{align} 
Furthermore, we may rewrite \eqref{inter a} as follows
\begin{align}
\int_{\mathbb{R}^n}|v(t,x)|^q\Psi(t,x)\,dx & = \frac{d}{dt} \int_{\mathbb{R}^n} \partial_t u(t,x)\Psi(t,x)\, dx   + b_1(t) \int_{\mathbb{R}^n} \partial_t u(t,x) \Psi(t,x)  \,dx \notag \\ & \quad + \int_{\mathbb{R}^n} \big( \partial_t u(t,x)- u(t,x)\big)\Psi(t,x)\, dx.
\label{inter c}
\end{align} If we multiply both sides of \eqref{inter c} by $m_1(t)$, we find
\begin{align}
 \frac{d}{dt} \bigg(m_1(t)\int_{\mathbb{R}^n} \partial_t u(t,x) \Psi(t,x) \bigg)  +m_1(t) \int_{\mathbb{R}^n} &\big( \partial_t u(t,x)- u(t,x)\big)\Psi(t,x)\, dx \notag \\ &=  m_1(t) \int_{\mathbb{R}^n}|v(t,x)|^q\Psi(t,x)\,dx.
\label{inter 9}
\end{align}
Adding \eqref{inter 7} and \eqref{inter 9}, we find
\begin{align}
 \frac{d}{dt} \bigg(m_1(t)\int_{\mathbb{R}^n}  \partial_t u(t,x) & \Psi(t,x) \, dx\bigg)   + 2m_1(t) \int_{\mathbb{R}^n}  \partial_t u(t,x)\Psi(t,x)\, dx  \notag \\ & \geqslant  \varepsilon\, m_1(0) \int_{\mathbb{R}^n}\big(u_0(x) +u_1(x)\big) \Phi(x) \, dx + m_1(t) \int_{\mathbb{R}^n}|v(t,x)|^q\Psi(t,x)\,dx  \notag \\ & \quad + \int_0^t m_1(s) \int_{\mathbb{R}^n}|v(s,x)|^q\Psi(s,x)\,dx. \label{inter 12}
\end{align}
Let us set the auxiliary functional
\begin{align*}
U_3(t) & \doteq m_1(t) \int_{\mathbb{R}^n}  \partial_t u(t,x)  \Psi(t,x) \, dx -\varepsilon\, \frac{m_1(0)}{2} \int_{\mathbb{R}^n}u_1(x) \Phi(x) \, dx -\frac{1}{2} \int_0^t m_1(s) \int_{\mathbb{R}^n}|v(s,x)|^q\Psi(t,x)\,dx \, ds \, .
\end{align*}
Clearly, $U_3(0)=\varepsilon I_1[u_1]$.  Besides, \eqref{inter 12} implies
\begin{align}
U_3'(t)+2 U_3(t) & \geqslant \varepsilon\, m_1(0) \int_{\mathbb{R}^n}u_0(x) \Phi(x) \, dx + \frac{1}{2}\, m_1(t) \int_{\mathbb{R}^n}  |v(t,x)|^q  \Psi(t,x) \, dx \geqslant 0.\label{inter 13}
\end{align} Hence, multiplying \eqref{inter 13} by $e^{2t}$ and integrating over $[0,t]$, we get $U_3(t)\geqslant e^{-2t} U_3(0)\geqslant 0$. 
Therefore, as $U_3$ is nonnegative we may write
\begin{align*} 
   m_1(t) \int_{\mathbb{R}^n}  \partial_t u(t,x)  \Psi(t,x) \, dx & \geqslant \varepsilon\, \frac{m_1(0)}{2} \int_{\mathbb{R}^n}u_1(x) \Phi(x) \, dx +\frac{1}{2} \int_0^t m_1(s) \int_{\mathbb{R}^n}|v(s,x)|^q\Psi(t,x)\,dx \, ds \\ 
   & \geqslant \varepsilon\, \frac{m_1(0)}{2} \int_{\mathbb{R}^n}u_1(x) \Phi(x) \, dx
\end{align*} which implies immediately \eqref{lower bound V2} due to \eqref{boundedness multipliers}.
\end{proof}

Using \eqref{lower bound U1} and \eqref{lower bound V2}, we may finally derive the lower bounds for the integrals with respect to the spatial variables of the semilinear terms.

\begin{prop}\label{prop lower bound nonlinearity} Let $(u,v)$ be an energy solution of \eqref{weakly coupled system} on $[0,T)$ with nonnegative, continuous and summable coefficients of the damping terms $b_1,b_2$. Furthermore, we require the same assumptions on $u_0,u_1,v_0,v_1$ as in Theorem \ref{Thm blowup |vt|^p, |u|^q}. Then, the following estimates hold
\begin{align}
\int_{\mathbb{R}^n}|v(t,x)|^q \, dx &\geqslant \widetilde{C} \varepsilon^q\, (1+t)^{n-1-\frac{n-1}{2}q}, \label{lower bound |u|^q}\\
\int_{\mathbb{R}^n}|\partial_t u(t,x)|^p \, dx &\geqslant \widetilde{K}\varepsilon^p \, (1+t)^{n-1-\frac{n-1}{2}p} \label{lower bound |vt|^p}
\end{align} for any $t\geqslant 0$, where $\widetilde{C},\widetilde{K}$ are positive constants depending on $n,p,q,b_1,b_2,R,u_1,v_0$.
\end{prop}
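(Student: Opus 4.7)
The plan is to derive both inequalities by the same Hölder duality argument, exploiting the lower bounds already proved for the weighted functionals $V_1$ and $U_2$ together with the finite speed of propagation. Specifically, by \eqref{support condition solution} and the fact that $\Psi(t,\cdot)$ is everywhere positive, we may restrict the integration defining $V_1(t)$ and $U_2(t)$ to the ball $B_{R+t}$. Applying Hölder's inequality with the conjugate pair $(q,q')$ where $q'=q/(q-1)$, we obtain
\begin{align*}
V_1(t) \leqslant \Big(\int_{\mathbb{R}^n}|v(t,x)|^q\,dx\Big)^{1/q}\Big(\int_{B_{R+t}}\Psi(t,x)^{q'}\,dx\Big)^{1/q'},
\end{align*}
and analogously $U_2(t) \leqslant \|\partial_t u(t,\cdot)\|_{L^p}\,\|\Psi(t,\cdot)\|_{L^{p'}(B_{R+t})}$ with $p'=p/(p-1)$.

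The key computation is therefore the upper bound for $\|\Psi(t,\cdot)\|_{L^{r'}(B_{R+t})}$ for $r\in\{p,q\}$. I would use the classical asymptotic behavior of the eigenfunction $\Phi$, namely $\Phi(x) \asymp (1+|x|)^{-(n-1)/2}e^{|x|}$ as $|x|\to \infty$ (with a straightforward adaptation in the low-dimensional cases), which can be read from the explicit form \eqref{def eigenfunction laplace op} by a stationary-phase-type computation. Passing to polar coordinates and integrating,
\begin{align*}
\int_{B_{R+t}}\Psi(t,x)^{r'}\,dx = e^{-r't}\int_{B_{R+t}}\Phi(x)^{r'}\,dx \lesssim e^{-r't}\int_0^{R+t}(1+\rho)^{n-1-(n-1)r'/2}e^{r'\rho}\,d\rho,
\end{align*}
and since the exponential dominates, this integral is controlled by $(1+t)^{n-1-(n-1)r'/2}$ up to a multiplicative constant depending on $R$. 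Raising to the $1/r'$ power yields $\|\Psi(t,\cdot)\|_{L^{r'}(B_{R+t})} \lesssim (1+t)^{(n-1)/r'-(n-1)/2}$.

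Combining this with the lower bounds \eqref{lower bound V1} and \eqref{lower bound V2}, and using the identity $r/r'=r-1$, the two estimates
\begin{align*}
\int_{\mathbb{R}^n}|v(t,x)|^q\,dx \geqslant \frac{V_1(t)^q}{\|\Psi(t,\cdot)\|_{L^{q'}(B_{R+t})}^q} \gtrsim \varepsilon^q (1+t)^{(n-1)q/2-(n-1)(q-1)} = \varepsilon^q(1+t)^{n-1-(n-1)q/2}
\end{align*}
and the analogous one for $\int|\partial_t u|^p\,dx$ follow directly. The constants $\widetilde C$ and $\widetilde K$ collect $I_2[v_0]^q$, $I_1[u_1]^p$, and the constants coming from the asymptotics of $\Phi$ and the polar-coordinates computation, all of which depend only on $n,p,q,b_1,b_2,R,u_1,v_0$.

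I do not expect any serious obstacle here: the only delicate point is the precise asymptotic estimate of $\Phi$, but this is classical (going back to the works cited in the introduction, e.g.\ \cite{YZ06}) and has been used in exactly the same way in the companion papers \cite{LT18Glass,LT18ComNon,PalTak19dt}; everything else is a one-line Hölder argument combined with the already-established lower bounds \eqref{lower bound V1}--\eqref{lower bound V2}.
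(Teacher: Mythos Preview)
Your proposal is correct and follows essentially the same route as the paper: apply H\"older's inequality on $B_{R+t}$ to bound $\int|v|^q$ (resp.\ $\int|\partial_t u|^p$) from below by $V_1(t)^q$ (resp.\ $U_2(t)^p$) divided by the appropriate power of $\|\Psi(t,\cdot)\|_{L^{r'}(B_{R+t})}$, then invoke \eqref{lower bound V1}--\eqref{lower bound V2} together with the standard estimate $\int_{B_{R+t}}\Psi(t,x)^{r'}\,dx\lesssim (1+t)^{n-1-\frac{n-1}{2}r'}$. The paper simply cites this last bound from \cite{YZ06} rather than re-deriving it from the asymptotics of $\Phi$, but otherwise the argument is identical.
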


\begin{rem} Let us underline explicitly that the conditions $u_1\not \equiv 0$ and $v_0\not \equiv 0$ guarantee that the multiplicative constants in \eqref{lower bound V1} and \eqref{lower bound V2} are positive. This fact will play a fundamental role in the proof of Proposition \ref{prop lower bound nonlinearity}.
\end{rem}

\begin{proof}
Let us prove \eqref{lower bound |u|^q}. By H\"older's inequality and $\supp v(t,\cdot)\subset B_{R+t}$ it follows
\begin{align*}
 \int_{\mathbb{R}^n}|v(t,x)|^q\, dx  &\geqslant \big(V_1(t)\big)^q\bigg(\int_{B_{R+t}}\big(\Psi(t,x)\big)^{q'}\, dx\bigg)^{-(q-1)} \\ &\gtrsim \Big(\varepsilon I_2[v_0]\Big)^q (1+t)^{n-1-\frac{n-1}{2}q},
\end{align*} where in the second inequality we used \eqref{lower bound V1} and the following estimate (cf. \cite[estimate (2.5)]{YZ06}):
\begin{align*}
\int_{B_{R+t}}\big(\Psi(t,x)\big)^{q'}\, dx \lesssim (1+t)^{n-1-\frac{n-1}{2}q'}.
\end{align*} Using \eqref{lower bound V2}, we can prove \eqref{lower bound |vt|^p} in a completely analogous way. 
\end{proof}

\section{Iteration argument} \label{Section iteration procedure}

In this section we combine the results from Sections \ref{Section iteration frame}-\ref{Section lower bounds nonlinear term} by using an iteration procedure to get a sequence of lower bound estimates for the functionals $V$ and $U'$ (for the definition of $U$ and $V$ see \eqref{def U,V} in Section \ref{Section iteration frame}).

More precisely, we want to prove that
\begin{align}
V(t) &\geqslant C_j (1+t)^{-b_j}t^{a_j} \qquad \mbox{ for any} \ t\geqslant 0 \label{U lower bound j}, \\
U'(t) &\geqslant K_j (1+t)^{-\beta_j}t^{\alpha_j} \qquad \mbox{for any} \ t\geqslant 0 \label{V' lower bound j},
\end{align} where $\{C_j\}_{j\in\mathbb{N}}$, $\{a_j\}_{j\in\mathbb{N}}$, $\{b_j\}_{j\in\mathbb{N}}$, $\{K_j\}_{j\in\mathbb{N}}$, $\{\alpha_j\}_{j\in\mathbb{N}}$ and $\{\beta_j\}_{j\in\mathbb{N}}$ are suitable sequences of nonnegative numbers that we will determine afterwards.

Our strategy is to prove \eqref{U lower bound j} and \eqref{V' lower bound j} by induction.

 Let us begin with the base case $j=0$. Plugging \eqref{lower bound |u|^q} in \eqref{ODI V, source |u|^q}, it results
\begin{align*}
U'(t) & \geqslant m_1(0)\widetilde{C} \varepsilon^q \int_0^t (1+s)^{n-1-\frac{n-1}{2}q} \, ds \geqslant  \tfrac{m_1(0)\widetilde{C}}{n}\,  \varepsilon^q  (1+t)^{-\frac{n-1}{2}q} t^{n}
\end{align*} which is \eqref{V' lower bound j} for $j=0$ provided that $K_0\doteq\frac{m_1(0)\widetilde{C}}{n}  \varepsilon^q $, $\alpha_0\doteq n$, $\beta_0\doteq \frac{n-1}{2}q$.
Analogously, combining \eqref{lower bound |vt|^p} and \eqref{ODI U, source |vt|^p}, we find
\begin{align*}
V(t)\geqslant  m_2(0)\widetilde{K} \varepsilon^p \int_0^t \int_0^s (1+\tau)^{n-1-\frac{n-1}{2}p} \, d\tau\, ds \geqslant 
\tfrac{m_2(0)\widetilde{K}}{n(n+1)} \, \varepsilon^p  (1+t)^{-\frac{n-1}{2}p} t^{n+1}.
\end{align*} So, we proved also \eqref{U lower bound j} for $j=0$ provided that $C_0\doteq\frac{m_2(0)\widetilde{K}}{n(n+1)}  \varepsilon^p $, $a_0\doteq n+1$, $b_0\doteq \frac{n-1}{2}p$.

Let us proceed now with the inductive step. If we plug \eqref{U lower bound j} in \eqref{ODI V, coupled with U}, then, for any $t\geqslant 0$ we have
\begin{align*}
U'(t) & \geqslant K C_j^q \int_0^t (1+s)^{-n(q-1)-b_jq} s^{a_jq} \, ds \geqslant K C_j^q  (1+t)^{-n(q-1)-b_jq}\int_0^t s^{a_jq} \, ds \\
& = K C_j^q (a_jq+1)^{-1} (1+t)^{-n(q-1)-b_jq} t^{a_jq+1}.
\end{align*} Thus, using the last lower bound in \eqref{ODI U, coupled with V}, we obtain for $t\geqslant 0$
\begin{align*}
V(t) & \geqslant C K^p C_j^{pq} (a_jq+1)^{-p} \int_0^t \int_0^s (1+\tau)^{-n(pq-1)-b_jpq}  \tau^{a_jpq+p}\, d\tau \, ds \\
& \geqslant C K^p C_j^{pq} (a_jq+1)^{-p}  (1+t)^{-n(pq-1)-b_jpq}\int_0^t \int_0^s   \tau^{a_jpq+p}\, d\tau \, ds \\
& = C K^p C_j^{pq} (a_jq+1)^{-p} (a_jpq+p+1)^{-1} (a_jpq+p+2)^{-1} (1+t)^{-n(pq-1)-b_jpq}   t^{a_jpq+p+2}.
\end{align*} Also, we proved \eqref{U lower bound j} for $j+1$ provided that $C_{j+1}\doteq C K^p C_j^{pq} (a_jq+1)^{-p} (a_jpq+p+1)^{-1} (a_jpq+p+2)^{-1}$, $a_{j+1}\doteq pq a_j+ p+2$ and $b_{j+1}\doteq b_j+n(pq-1)$.

\noindent
Similarly, if we plug \eqref{V' lower bound j} in \eqref{ODI U, coupled with V}, then, for any $t\geqslant 0$ we get
\begin{align*}
V(t) & \geqslant C K_j^p \int_0^t \int_0^s (1+\tau)^{-n(p-1)-\beta_j p}  \tau^{\alpha_j p}  \, d\tau\, ds \\
& \geqslant C K_j^p   (1+t)^{-n(p-1)-\beta_j p} \int_0^t \int_0^s \tau^{\alpha_j p}  \, d\tau\, ds \\
& = C K_j^p   (\alpha_j p+1)^{-1}(\alpha_j p+2)^{-1} (1+t)^{-n(p-1)-\beta_j p}  t^{\alpha_j p+2} .
\end{align*} Consequently, a combination of the last lower bound with \eqref{ODI V, coupled with U} yields 
\begin{align*}
U'(t) & \geqslant K C^q K_j^{pq} (\alpha_j p+1)^{-q}(\alpha_j p+2)^{-q} \int_0^t (1+s)^{-n(qp-1)-\beta_j pq}  s^{\alpha_j pq+2q}\, ds \\
& \geqslant K C^q K_j^{pq} (\alpha_j p+1)^{-q}(\alpha_j p+2)^{-q}(\alpha_j pq+2q+1)^{-1} (1+t)^{-n(qp-1)-\beta_j pq} t^{\alpha_j pq+2q+1}\, ds \\
\end{align*} for any $t\geqslant 0$. Hence, we proved \eqref{V' lower bound j} for $j+1$ provided that $\alpha_{j+1}\doteq pq \alpha_j+ 2q+1$, $\beta_{j+1}\doteq \beta_j+n(pq-1)$ and $K_{j+1}\doteq K C^q  K_j^{pq} (\alpha_j p+1)^{-q}(\alpha_j p+2)^{-q}(\alpha_j pq+2q+1)^{-1}$.

It is clear, from the recursive relations and from the nonnegative values of the initial constants $C_0$, $K_0$, $a_0$, $b_0$, $\alpha_0$, $\beta_0$, that  $C_j,K_j,a_j,b_j,\alpha_j,\beta_j$ are nonnegative real numbers for all $j\in\mathbb{N}$. Next we determine the explicit expressions for $a_j,b_j,\alpha_j,\beta_j$ and lower bound estimates for $C_j,K_j$. As $a_j=pq a_{j-1}+p+2$, employing iteratively this condition and the value $a_0=n+1$, we find
\begin{align*}
a_j = pq a_{j-1} +p+2 = \cdots = a_0 (pq)^j + (p+2) \sum_{k=0}^{j-1}(pq)^k =\Big(n+1+\tfrac{p+2}{pq-1}\Big)(pq)^j - \tfrac{p+2}{pq-1}.
\end{align*} Analogously,
\begin{align*}
\alpha_j &= \alpha_0 (pq)^j + (2q+1) \sum_{k=0}^{j-1}(pq)^k  = \Big(n+\tfrac{2q+1}{pq-1}\Big)(pq)^j - \tfrac{2q+1}{pq-1}, \\
b_j &= b_0 (pq)^j + n(pq-1) \sum_{k=0}^{j-1}(pq)^k  = \Big(\tfrac{n-1}{2}p+n \Big)(pq)^j - n, \\
\beta_j &= \beta_0 (pq)^j + n(pq-1) \sum_{k=0}^{j-1}(pq)^k  = \Big(\tfrac{n-1}{2}q+n \Big)(pq)^j - n.
\end{align*} In particular, using the representation formulas for $a_j$ and $\alpha_j$, we may derive lower bounds for $C_j$ and $K_j$. Indeed, due to 
\begin{align*}
a_{j-1}pq+p+2 & =  a_{j} \leqslant \Big(n+1+\tfrac{p+2}{pq-1}\Big)(pq)^j  , \\
\alpha_{j-1} pq+2q+1 & = \alpha_j  \leqslant \Big(n+\tfrac{2q+1}{pq-1}\Big)(pq)^j  ,
\end{align*}  we have
\begin{align}
C_{j} &= C K^p C_{j-1}^{pq} (a_{j-1}q+1)^{-p} (a_{j-1}qp+p+1)^{-1} (a_{j-1}pq+p+2)^{-1} \notag \\ &\geqslant C K^p C_{j-1}^{pq}  (a_{j-1}pq+p+2)^{-(p+2)}  \geqslant M  (pq)^{-(p+2)j} C_{j-1}^{pq} \label{Cj lower bound}
\end{align} and 
\begin{align}
K_j & = K C^q  K_{j-1}^{pq} (\alpha_j p+1)^{-q}(\alpha_j p+2)^{-q}(\alpha_j pq+2q+1)^{-1} \notag \\ & \geqslant K C^q  K_{j-1}^{pq} (\alpha_j pq+2q+1)^{-(2q+1)} \geqslant \widetilde{M} (pq)^{-(2q+1)j} K_{j-1}^{pq}, \label{Kj lower bound}
\end{align} where $M\doteq  C K^p \big(n+1+\tfrac{p+2}{pq-1}\big)^{-(p+2)}$ and $\widetilde{M}\doteq  K C^q \big(n+\tfrac{2q+1}{pq-1}\big)^{-(2q+1)}$.

 Applying the  logarithmic function to both sides of \eqref{Cj lower bound} and using in an iterative way the resulting estimate, we arrive at
\begin{align}
\log C_j  & \geqslant pq \log C_{j-1}  -j\log ((pq)^{p+2})+\log M \notag \\
& \geqslant (pq)^2 \log C_{j-2}  -(j+(j-1)pq)\log ((pq)^{p+2})+(1+pq)\log M \notag \\
& \geqslant \cdots \geqslant (pq)^j \log C_{0}  -\sum_{k=0}^{j-1} (j-k)(pq)^k\log ((pq)^{p+2})+\sum_{k=0}^{j-1}(pq)^k \log M \notag \\
&= (pq)^j \Big(\log C_0-\tfrac{pq}{(pq-1)^2}\log((pq)^{p+2})+\tfrac{\log M}{pq-1}\Big)  +(j+1)\,\tfrac{\log((pq)^{p+2})}{pq-1}+\tfrac{\log((pq)^{p+2})}{(pq-1)^2}-\tfrac{\log M}{pq-1}, \label{lower bound log Cj}
\end{align} where we used the formulas 
\begin{align}\label{sum formulas}
\sum_{k=0}^{j-1}(pq)^k=\frac{(pq)^{j}-1}{pq-1}, \qquad\sum_{k=0}^{j-1} (j-k)(pq)^k= \frac{1}{pq-1} \bigg(\frac{(pq)^{j+1}-1}{pq-1}-(j+1)\bigg),
\end{align} that can be proved via an inductive argument.

Therefore, for $j\geqslant j_1\doteq \lceil \frac{\log M}{\log ((pq)^{p+2})}-1-\frac{1}{pq-1} \rceil$ by \eqref{lower bound log Cj} we get 
\begin{align}
\log C_j  & \geqslant  (pq)^j \Big(\log C_0-\tfrac{pq}{(pq-1)^2}\log((pq)^{p+2})+\tfrac{\log M}{pq-1}\Big)= (pq)^j \log (N \varepsilon^p) ,\label{lower bound log Cj 2}
\end{align} where $N\doteq \frac{m_2(0)\widetilde{K}}{n(n+1)} ((pq)^{p+2})^{-\frac{pq}{(pq-1)^2}}M^{\frac{1}{pq-1}}$. 

Analogously, from \eqref{Kj lower bound} we derive the estimate
\begin{align}
\log K_j  & \geqslant  (pq)^j \Big(\log K_0-\tfrac{pq}{(pq-1)^2}\log((pq)^{2q+1})+\tfrac{\log \widetilde{M}}{pq-1}\Big)= (pq)^j \log (\widetilde{N} \varepsilon^q) \label{lower bound log Kj 2}
\end{align} for $j\geqslant j_2\doteq \lceil \frac{\log \widetilde{M}}{\log ((pq)^{2q+1})}-1-\frac{1}{pq-1} \rceil$, where $\widetilde{N}\doteq \frac{m_1(0)\widetilde{C}}{n} ((pq)^{2q+1})^{-\frac{pq}{(pq-1)^2}}\widetilde{M}^{\frac{1}{pq-1}}$. 

In the next section we will combine \eqref{U lower bound j}, \eqref{lower bound log Cj 2} and \eqref{V' lower bound j}, \eqref{lower bound log Kj 2} to complete the proof of Theorem  \ref{Thm blowup |vt|^p, |u|^q} in the case $\Theta_1(n,p,q)>0$ and in the case  $\Theta_2(n,p,q)>0$, respectively.

\section{conclusion of the proof of Theorem \ref{Thm blowup |vt|^p, |u|^q}} \label{Section end of the proof of the Theorem subcritical case}

Let us start with the case $\Theta_1(n,p,q)>0$.
Combining \eqref{U lower bound j} and \eqref{lower bound log Cj 2}, we have for $t\geqslant 0$ and $j\geqslant j_1$
\begin{align*}
V(t) & \geqslant \exp\big((pq)^j \log(N\varepsilon^p)\big) (1+t)^{-b_j} t^{a_j}   \\ & = \exp\Big((pq)^j \log\Big(N\varepsilon^p (1+t)^{-(\frac{n-1}{2}p+n)}t^{n+1+\frac{p+2}{pq-1}}\Big)\Big) (1+t)^{n} t^{-\frac{p+2}{pq-1}} . 
\end{align*} As for $t\geqslant 1$ it holds $(1+t)\leqslant 2t$, the previous estimate yields 
\begin{align}
V(t) & \geqslant \exp\Big((pq)^j \log\Big(2^{-(\frac{n-1}{2}p+n)}N\varepsilon^p t^{\frac{pq+p+1}{pq-1}-\frac{n-1}{2}p}\Big)\Big) (1+t)^{n} t^{-\frac{p+2}{pq-1}}   \notag \\ & = \exp\Big((pq)^j \log\Big(2^{-(\frac{n-1}{2}p+n)}N\varepsilon^p t^{p\Theta_1(n,p,q)}\Big)\Big) (1+t)^{n} t^{-\frac{p+2}{pq-1}} \notag\\
& = \exp\big((pq)^j \log\big(\varepsilon^p J(t)\big)\big) (1+t)^{n} t^{-\frac{p+2}{pq-1}}  \label{U lower bound J funct}
\end{align}
for $t\geqslant 1$, where $J(t)\doteq2^{-(\frac{n-1}{2}p+n)}N t^{p\Theta_1(n,p,q)} $. Consequently, we may choose $\varepsilon_0$ sufficiently small such that 
\begin{align*}
2^{(\frac{n-1}{2}+\frac{n}{p})\Theta_1(n,p,q)^{-1}}N^{-(p\Theta_1(n,p,q))^{-1}}\varepsilon_0^{\Theta_1(n,p,q)^{-1}}\geqslant 1.
\end{align*} So, for $\varepsilon\in (0,\varepsilon_0]$ and for $t\geqslant 2^{(\frac{n-1}{2}+\frac{n}{p})\Theta_1(n,p,q)^{-1}}N^{-(p\Theta_1(n,p,q))^{-1}}\varepsilon^{\Theta_1(n,p,q)^{-1}}$ it holds $J(t)>0$. Consequently, letting $j\to \infty$ in \eqref{U lower bound J funct}, the lower bound of $V(t)$ blows up and, then, $V(t)$ cannot be finite. Also, we proved that $V$ may be definite only for $t\lesssim \varepsilon^{\Theta_1(n,p,q)^{-1}}$.

Now, we prove the result in the case $\Theta_2(n,p,q)>0$.
Combining \eqref{V' lower bound j} and \eqref{lower bound log Kj 2}, we have for $t\geqslant 0$ and $j\geqslant j_2$
\begin{align*}
U'(t) & \geqslant \exp\big((pq)^j \log(\widetilde{N}\varepsilon^q)\big) (1+t)^{-\beta_j} t^{\alpha_j}   \\ & = \exp\Big((pq)^j \log\Big(\widetilde{N}\varepsilon^q(1+t)^{-(\frac{n-1}{2}q+n)}t^{n+\frac{2q+1}{pq-1}}\Big)\Big) (1+t)^{n} t^{-\frac{2q+1}{pq-1}} .
\end{align*} Then, for $t\geqslant 1$ it holds
\begin{align}
U'(t) & \geqslant \exp\Big((pq)^j \log\Big(2^{-(\frac{n-1}{2}q+n)}\widetilde{N}\varepsilon^q t^{q\Theta_2(n,p,q)}\Big)\Big) (1+t)^{n} t^{-\frac{2q+1}{pq-1}} \notag\\
& = \exp\big((pq)^j \log\big(\varepsilon^q \widetilde{J}(t)\big)\big) (1+t)^{n} t^{-\frac{p+2}{pq-1}}  \label{V' lower bound J funct}
\end{align}
for $t\geqslant 1$, where $\widetilde{J}(t)\doteq2^{-(\frac{n-1}{2}q+n)}\widetilde{N} t^{q\Theta_2(n,p,q)} $. Hence,  we can take $\varepsilon_0$ so small that 
\begin{align*}
2^{(\frac{n-1}{2}+\frac{n}{q})\Theta_2(n,p,q)^{-1}}\widetilde{N}^{-(q\Theta_2(n,p,q))^{-1}}\varepsilon_0^{\Theta_2(n,p,q)^{-1}}\geqslant 1.
\end{align*} Thus, for $\varepsilon\in (0,\varepsilon_0]$ and for $t\geqslant 2^{(\frac{n-1}{2}+\frac{n}{q})\Theta_2(n,p,q)^{-1}}\widetilde{N}^{-(q\Theta_2(n,p,q))^{-1}}\varepsilon^{\Theta_2(n,p,q)^{-1}}$ it holds $\widetilde{J}(t)>0$. Also, as $j\to \infty$ in \eqref{V' lower bound J funct}, the lower bound of $U'(t)$ diverges and $U'(t)$ is not finite. In this second case, we proved that $U'$ can be finite only for $t\lesssim \varepsilon^{\Theta_2(n,p,q)^{-1}}$. Combining the two possible cases, we proved the result and the upper bound estimate for the lifespan \eqref{lifespan upper bound estimate}.

\section{Critical case} \label{Section critical case}


In the critical case, we restrict our considerations to the not-damped case. 
Therefore, we shall consider the  weakly coupled system of semilinear wave equations \eqref{weakly coupled system crit} 
 in the critical case $\max\{\Theta_1(n,p,q),\Theta_2(n,p,q)\}=0$. We will generalize the approach from \cite{WakYor18,WakYor18damp} for a single semilinear equation and from \cite{PalTak19} for a weakly coupled system with power nonlinearities, in order to deal with the mixed type of nonlinear terms. 

For the sake of readability, we recall the definition of weak solution to \eqref{weakly coupled system crit}.

\begin{defn} \label{def weak sol crit case} Let $u_0,v_0\in H^1(\mathbb{R}^n)$ and $u_1,v_1\in L^2(\mathbb{R}^n)$.
We say that $(u,v)$ is a weak solution of \eqref{weakly coupled system crit} on $[0,T)$ if 
\begin{align*}
& u\in \mathcal{C}([0,T),H^1(\mathbb{R}^n))\cap \mathcal{C}^1([0,T),L^2(\mathbb{R}^n))\quad \mbox{and} \quad   \partial_ tu\in L^p_{\loc}([0,T)\times\mathbb{R}^n), \\
& v\in \mathcal{C}([0,T),H^1(\mathbb{R}^n))\cap \mathcal{C}^1([0,T),L^2(\mathbb{R}^n)) \quad \mbox{and} \quad   v\in  L^q_{\loc}([0,T)\times\mathbb{R}^n)
\end{align*}
satisfy 
the equalities
\begin{align} 
\int_{\mathbb{R}^n} & \Big( \partial_t u(t,x)\phi(t,x)-u(t,x)\phi_s(t,x)\Big)\,dx-\varepsilon\int_{\mathbb{R}^n}\Big( u_1(x)\phi(0,x)- u_0(x)\phi_s(0,x)\Big)\,dx \notag \\
&  + \int_0^t\int_{\mathbb{R}^n} u(s,x)\Big(\phi_{ss}(s,x)-\Delta \phi(s,x)\Big) \,dx\, ds   =\int_0^t \int_{\mathbb{R}^n}|v(s,x)|^q\phi(s,x)\,dx \, ds  \label{def u crit}
\end{align} and 
\begin{align} 
\int_{\mathbb{R}^n} & \Big( \partial_t v(t,x)\psi(t,x)-v(t,x)\psi_s(t,x)\Big)\,dx-\varepsilon\int_{\mathbb{R}^n}\Big( v_1(x)\psi(0,x)- v_0(x)\psi_s(0,x)\Big)\,dx \notag \\
&  + \int_0^t\int_{\mathbb{R}^n} v(s,x)\Big(\psi_{ss}(s,x)-\Delta \psi(s,x)\Big) \,dx\, ds   =\int_0^t \int_{\mathbb{R}^n}|\partial_t u(s,x)|^p\phi(s,x)\,dx \, ds \label{def v crit}
\end{align}
for any test functions $\phi,\psi \in \mathcal{C}_0^\infty([0,T)\times\mathbb{R}^n)$ and any $t\in [0,T)$.
\end{defn}

The remaining part of this section is organized as follows: first, in Section \ref{Subsection functionals crit case} we recall some auxiliary functions from \cite{WakYor18} and we use them to introduce the functionals for the critical case;
in Section \ref{Subsection iteration frame} we derive the iteration frame for these functionals, that is, a coupled system of nonlinear ordinary integral inequalities;
in Section \ref{Subsection lower bound log}  lower bound estimates for the functionals, that allow to start with the iteration procedure, are derived;
then, in Section \ref{Subsection iteration argument slicing method} we combine the iteration frame from Section \ref{Subsection iteration frame} and the lower bounds from Section \ref{Subsection lower bound log} with a slicing method;
hence, in Section \ref{Subsection lifespan critical case} we use the sequences of lower bounds for the functionals from Section \ref{Subsection iteration argument slicing method}  to prove the blow-up result and to establish the upper bound for the lifespan;
finally, in Section \ref{Subsection comparison with ISW} we compare our results with those proved in Section 9 of \cite{ISW18} and we provide the analytic expression of the coordinates of  the cusp point for the critical curve in the $p$ - $q$ plane.

\subsection{Introduction of the functionals for the critical case} 
\label{Subsection functionals crit case}

Throughout the treatment of the critical case we will employ the auxiliary functions 
\begin{align*}
\eta_r(t,s,x) & \doteq \int_0^{\lambda_0} e^{-\lambda(R+t)} \frac{\sinh \lambda (t-s)}{ \lambda (t-s)} \, \Phi(\lambda x) \, \lambda^r \, d\lambda, \\
\xi_r(t,s,x) & \doteq \int_0^{\lambda_0} e^{-\lambda(R+t)} \cosh \lambda (t-s) \, \Phi(\lambda x) \, \lambda^r \, d\lambda,
\end{align*} where $r>-1$, $\lambda_0$ is a fixed positive constant and $\Phi$ is defined by \eqref{def eigenfunction laplace op}. These auxiliary functions have been introduced in \cite{WakYor18} as generalizations of the test function considered by Zhou (see \cite[equation (3.2)]{Zhou07}) in the treatment of the critical case for the semilinear wave equation with power nonlinearity in the higher dimensional case. Let us underline that the assumption on $r$ is done in order to guarantee the integrability of the function $\lambda^r$ in a neighborhood of $0$.

As functionals to study the blow-up dynamic we will consider 
\begin{align}
\mathcal{U}(t) & \doteq \int_{\mathbb{R}^n} \partial_t u(t,x) \, \eta_{r_1}(t,t,x) \, dx , \label{def mathcalU}\\
\mathcal{V}(t) & \doteq \int_{\mathbb{R}^n} v(t,x)\, \eta_{r_2}(t,t,x) \, dx. \label{def mathcalV}
\end{align} We point out that the choice of the conditions for the pair $(r_1,r_2)$ depends on the critical case we deal with. More specifically, we have to distinguish among the three possible subcases $\Theta_1(n,p,q)=0>\Theta_2(n,p,q)$, $\Theta_1(n,p,q)<0=\Theta_2(n,p,q)$ and $\Theta_1(n,p,q)=\Theta_2(n,p,q)=0$.

First, we derive two fundamental identities for $\mathcal{U}$ and $\mathcal{V}$, which involve the initial data and the nonlinear terms.

\begin{prop} Let $(u,v)$ be a weak solution of \eqref{weakly coupled system crit} on $[0,T)$ and let $\mathcal{U,\mathcal{V}}$ denote the functionals defined by \eqref{def mathcalU}, \eqref{def mathcalV}. Then, the following identities are satisfied for any $t\geqslant 0$:
\begin{align}
\mathcal{U}(t) & = \varepsilon t \int_{\mathbb{R}^n} u_0(x)\,\eta_{r_1+2}(t,0,x) \, dx + \varepsilon  \int_{\mathbb{R}^n} u_1(x)\,\xi_{r_1}(t,0,x) \, dx +\int_0^t \int_{\mathbb{R}^n} |v(s,x)|^q\,\xi_{r_1}(t,s,x) \, dx \, ds,
\label{fundamental identity mathcalU} \\
\mathcal{V}(t) & = \varepsilon  \int_{\mathbb{R}^n} v_0(x)\,\xi_{r_2}(t,0,x) \, dx + \varepsilon t  \int_{\mathbb{R}^n} v_1(x)\,\eta_{r_2}(t,0,x) \, dx +\int_0^t (t-s) \int_{\mathbb{R}^n} |\partial_t u(s,x)|^p\,\eta_{r_2}(t,s,x) \, dx \, ds.
\label{fundamental identity mathcalV}
\end{align}
\end{prop}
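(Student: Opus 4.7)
The plan is to plug into the weak formulations \eqref{def u crit} and \eqref{def v crit} one-parameter families of test functions that solve the free backward wave equation, and then to average them against $e^{-\lambda(R+t)}\lambda^{r_j}\,d\lambda$ on $\lambda\in(0,\lambda_0)$. The key structural fact is that the identity $\Delta\Phi=\Phi$ gives $\Delta_x[\Phi(\lambda x)]=\lambda^2\Phi(\lambda x)$, so the two candidate test functions
\[
\phi_\lambda(s,x)\doteq\cosh\lambda(t-s)\,\Phi(\lambda x),\qquad \psi_\lambda(s,x)\doteq \tfrac{\sinh\lambda(t-s)}{\lambda}\,\Phi(\lambda x)
\]
both satisfy $\partial_s^2-\Delta_x=0$. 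Strictly speaking they fail to belong to $\mathcal{C}_0^\infty([0,T)\times\mathbb{R}^n)$ (they grow exponentially in $|x|$ through $\Phi$ and do not vanish near $s=T$), but the finite-speed-of-propagation property \eqref{support condition solution} allows one to insert smooth cut-offs $\chi(x)\rho(s)$ equal to $1$ on $\{|x|\leq t+R\}\times[0,t]$ and vanishing near $\partial([0,T)\times\mathbb{R}^n)$: the correction terms generated by the cut-offs are supported away from $\supp u\cup\supp v$ and so do not contribute.

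For \eqref{fundamental identity mathcalU} I insert $\phi_\lambda$ into \eqref{def u crit}. Since $\phi_\lambda(t,x)=\Phi(\lambda x)$, $\partial_s\phi_\lambda(t,x)=0$, $\phi_\lambda(0,x)=\cosh\lambda t\,\Phi(\lambda x)$, $\partial_s\phi_\lambda(0,x)=-\lambda\sinh\lambda t\,\Phi(\lambda x)$ and $(\partial_s^2-\Delta_x)\phi_\lambda=0$, the weak identity collapses to
\begin{align*}
\int_{\mathbb{R}^n}\!\partial_t u(t,x)\Phi(\lambda x)\,dx = {} & \varepsilon\!\int_{\mathbb{R}^n}\!u_1(x)\cosh\lambda t\,\Phi(\lambda x)\,dx + \varepsilon\!\int_{\mathbb{R}^n}\!u_0(x)\,\lambda\sinh\lambda t\,\Phi(\lambda x)\,dx \\
& + \int_0^t\!\!\int_{\mathbb{R}^n}\!|v(s,x)|^q\cosh\lambda(t-s)\,\Phi(\lambda x)\,dx\,ds.
\end{align*}
Multiplying by $e^{-\lambda(R+t)}\lambda^{r_1}$, integrating over $\lambda\in(0,\lambda_0)$, and applying Fubini (legitimate because $e^{-\lambda(R+t)}\Phi(\lambda x)\lesssim 1$ on $\supp u(s,\cdot)\cup\supp v(s,\cdot)$ for $s\leq t$, and $\lambda^{r_1}$ is integrable near $0$ thanks to $r_1>-1$), the left-hand side becomes $\mathcal{U}(t)$, while the $u_1$ term and the source term are immediately identified with $\varepsilon\int u_1\,\xi_{r_1}(t,0,x)\,dx$ and $\int_0^t\!\int|v|^q\xi_{r_1}(t,s,x)\,dx\,ds$. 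The only nontrivial rewriting is $\lambda\sinh\lambda t=t\lambda^2\,\tfrac{\sinh\lambda t}{\lambda t}$, which raises the weight from $r_1$ to $r_1+2$ and produces the factor $t$, yielding $\varepsilon t\int u_0\,\eta_{r_1+2}(t,0,x)\,dx$ and therefore \eqref{fundamental identity mathcalU}.

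Identity \eqref{fundamental identity mathcalV} is proved in the same way, using $\psi_\lambda$ as a test function in \eqref{def v crit}. Here $\psi_\lambda(t,x)=0$ and $\partial_s\psi_\lambda(t,x)=-\Phi(\lambda x)$, so the boundary contribution at $s=t$ is $\int v(t,x)\Phi(\lambda x)\,dx$, while $\psi_\lambda(0,x)=\tfrac{\sinh\lambda t}{\lambda}\Phi(\lambda x)$ and $\partial_s\psi_\lambda(0,x)=-\cosh\lambda t\,\Phi(\lambda x)$ generate the initial-data terms. Integrating against $e^{-\lambda(R+t)}\lambda^{r_2}\,d\lambda$, the factorizations $\tfrac{\sinh\lambda t}{\lambda}=t\,\tfrac{\sinh\lambda t}{\lambda t}$ and $\tfrac{\sinh\lambda(t-s)}{\lambda}=(t-s)\,\tfrac{\sinh\lambda(t-s)}{\lambda(t-s)}$ transform the corresponding expressions into $t\,\eta_{r_2}(t,0,x)$ and $(t-s)\,\eta_{r_2}(t,s,x)$, while the $v_0$ term is recognized at once as $\varepsilon\int v_0\,\xi_{r_2}(t,0,x)\,dx$; rearranging produces \eqref{fundamental identity mathcalV}. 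The only delicate point of the whole argument is the approximation that legitimizes $\phi_\lambda,\psi_\lambda$ as admissible test functions: both the Fubini interchange in $\lambda$ and the boundary computations at $s=0,t$ reduce, after this approximation, to elementary manipulations based on $\Delta\Phi=\Phi$ and on the hyperbolic identities recalled above.
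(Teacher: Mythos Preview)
Your proof is correct and follows essentially the same approach as the paper: choose the test functions $\phi_\lambda(s,x)=\cosh\lambda(t-s)\Phi(\lambda x)$ and $\psi_\lambda(s,x)=\lambda^{-1}\sinh\lambda(t-s)\Phi(\lambda x)$ in \eqref{def u crit}--\eqref{def v crit}, use that they are solutions of the free wave equation together with the support condition \eqref{support condition solution} to justify their admissibility, and then multiply by $e^{-\lambda(R+t)}\lambda^{r_j}$, integrate in $\lambda$ and apply Fubini. You have in fact spelled out a couple of points (the cut-off approximation, the algebraic identity $\lambda\sinh\lambda t=t\lambda^{2}\tfrac{\sinh\lambda t}{\lambda t}$ producing the shift $r_1\mapsto r_1+2$, and the Fubini justification via $r_j>-1$) more explicitly than the paper does.
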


\begin{proof}
In order to show the validity of \eqref{fundamental identity mathcalU} and \eqref{fundamental identity mathcalV} we will employ the definition of weak solution for \eqref{weakly coupled system crit} with a suitable choice of the test functions $(\phi,\psi)$ in \eqref{def u crit} and \eqref{def v crit}. 
If we assume that $(u,v)$ satisfies \eqref{support condition solution}, then, $\supp u(t,\cdot), \supp v(t,\cdot)\subset B_{R+t}$ for any $t\geqslant 0$. Therefore, we may remove the assumption of compactness for the supports of the test functions in Definition \ref{def weak sol crit case}. Hence, it is possible to consider 
\begin{align*}
\phi &=\phi(t;s,x)= \cosh \lambda (t-s) \, \Phi(\lambda x), \\
\psi &=\psi(t;s,x)=\frac{\sinh \lambda (t-s)}{\lambda} \,  \Phi(\lambda x).
\end{align*} Since $\Delta \Phi(\lambda x)= \lambda^2 \Phi(\lambda x)$, then, $\phi,\psi$ are solutions of the homogeneous free wave equation. Moreover, 
\[\begin{array}{llll}
 \phi(t;t,x)=\Phi(\lambda x),  & \phi(t;0,x)=\cosh \lambda t \, \Phi(\lambda x),   & \phi_s(t;t,x)=0, & \phi_s(t;0,x)=-\lambda \sinh \lambda t \, \Phi(\lambda x), \\
 \psi(t;t,x) =0,  & \psi(t;0,x)= \lambda^{-1} \sinh \lambda t \, \Phi(\lambda x),   & \psi_s(t;t,x)=\! -\Phi(\lambda x),  & \psi_s(t;0,x)=-\cosh \lambda x \, \Phi(\lambda x). 
\end{array}\]
Consequently, from \eqref{def u crit} and \eqref{def v crit} we obtain 
\begin{align}
\int_{\mathbb{R}^n}\partial_t u(t,x) \Phi(\lambda x) \, dx & =  \varepsilon \lambda \sinh \lambda t \int_{\mathbb{R}^n} u_0(x)\Phi(\lambda x) \, dx  + \varepsilon \cosh \lambda t \int_{\mathbb{R}^n} u_1(x)\Phi(\lambda x) \, dx  \notag \\ & \quad +\int_0^t \int_{\mathbb{R}^n} |v(s,x)|^q \cosh \lambda (t-s) \Phi(\lambda x) \, dx \, ds \label{u intermediate estimate} \\
\int_{\mathbb{R}^n} v(t,x) \Phi(\lambda x) \, dx & =  \varepsilon \cosh \lambda t \int_{\mathbb{R}^n} v_0(x)\Phi(\lambda x) \, dx + \varepsilon \lambda^{-1} \sinh \lambda t \int_{\mathbb{R}^n} v_1(x)\Phi(\lambda x) \, dx \notag \\ & \quad +\int_0^t \int_{\mathbb{R}^n} |\partial_t u(s,x)|^p \lambda^{-1} \sinh \lambda (t-s) \Phi(\lambda x) \, dx \, ds. \label{v intermediate estimate}
\end{align}
Multiplying both sides of \eqref{u intermediate estimate} by $e^{-\lambda(R+t)}\lambda^{r_1}$, integrating the resulting relation with respect to $\lambda$ over $[0,\lambda_0]$ and, finally, applying Fubini's theorem, we get \eqref{fundamental identity mathcalU}. Similarly, from \eqref{v intermediate estimate} we find \eqref{fundamental identity mathcalV}. This concludes the proof.
\end{proof}

The next step is to derive from \eqref{fundamental identity mathcalU} and \eqref{fundamental identity mathcalV} the iteration frame. In order to do so, we need to estimate sharply the auxiliary functions $\eta_r$ and $\xi_r$.
\begin{lem} \label{lemma eta and xi estimates}Let $n\geqslant 2$. There exist $\lambda_0>0$ such that the following properties hold:
\begin{itemize}
\item[\rm{(i)}] if $r>-1$, $|x|\leqslant R$ and $t\geqslant 0$, then, 
\begin{align*}
\xi_r(t,0,x) & \geqslant A_0, \\
\eta_r(t,0,x) & \geqslant B_0 \langle t\rangle^{-1};
\end{align*}
\item[\rm{(ii)}] if $r>-1$, $|x|\leqslant s+R$ and $t>s\geqslant 0$, then, 
\begin{align*}
\xi_r(t,s,x) & \geqslant A_1  \langle s\rangle^{-r-1},\\
\eta_r(t,s,x) & \geqslant B_1 \langle t\rangle^{-1} \langle s\rangle^{-r};
\end{align*}
\item[\rm{(iii)}] if $r>\frac{n-3}{2}$, $|x|\leqslant t+R$ and $t> 0$, then, 
\begin{align*}
\eta_r(t,t,x) & \leqslant B_2 \langle t\rangle^{-\frac{n-1}{2}} \langle t-|x| \rangle^{\frac{n-3}{2}-r}.
\end{align*}
\end{itemize}
Here $A_0$ and $B_k$, $k=0,1,2$, are positive constants depending only on $\lambda_0$, $r$ and $R$ and we denote $\langle y\rangle \doteq 3+|y|$.
\end{lem}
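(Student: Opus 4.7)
The plan is to exploit the explicit integral representation of $\Phi$ in \eqref{def eigenfunction laplace op} to rewrite $\xi_r$ and $\eta_r$ as double integrals of pure exponentials, and then to apply repeatedly the elementary two-sided estimate
\begin{align*}
\int_0^{\lambda_0} e^{-\lambda\tau}\lambda^r\, d\lambda \asymp (1+\tau)^{-(r+1)}\qquad (r>-1,\ \tau\geqslant 0),
\end{align*}
together with the pointwise Bessel-type bound $\Phi(y)\lesssim(1+|y|)^{-(n-1)/2}e^{|y|}$ for the upper estimate (iii). Concretely, Fubini swaps the spherical integration $\Phi(\lambda x)=\int_{\mathbb{S}^{n-1}}e^{\lambda\omega\cdot x}dS_\omega$ with the $\lambda$-integration and, after grouping the exponentials with $e^{-\lambda(R+t)}$ and $e^{\pm\lambda(t-s)}$, produces the identities
\begin{align*}
\xi_r(t,s,x)&=\frac{1}{2}\int_{\mathbb{S}^{n-1}}\!\!\int_0^{\lambda_0}\!\bigl(e^{-\lambda\rho_\omega}+e^{-\lambda\sigma_\omega}\bigr)\lambda^r\, d\lambda\, dS_\omega,\\
\eta_r(t,s,x)&=\frac{1}{2(t-s)}\int_{\mathbb{S}^{n-1}}\!\!\int_0^{\lambda_0}\!\bigl(e^{-\lambda\rho_\omega}-e^{-\lambda\sigma_\omega}\bigr)\lambda^{r-1}\, d\lambda\, dS_\omega,
\end{align*}
with $\rho_\omega\doteq R+s-\omega\cdot x\geqslant 0$, $\sigma_\omega\doteq R+2t-s-\omega\cdot x\geqslant 0$ and $\sigma_\omega-\rho_\omega=2(t-s)$.

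For the $\xi_r$ part of (i) and (ii), I would drop $e^{-\lambda\sigma_\omega}$ by nonnegativity, apply the elementary estimate, and use $\rho_\omega\leqslant 2(R+s)$ to obtain $\xi_r(t,s,x)\gtrsim\langle s\rangle^{-(r+1)}$, which collapses to a positive constant when $s=0$. For the $\eta_r$ part I would rewrite $e^{-\lambda\rho_\omega}-e^{-\lambda\sigma_\omega}=\lambda\int_{\rho_\omega}^{\sigma_\omega}e^{-\lambda\tau}d\tau$, exchange the order of integration once more, and bound the inner $\lambda$-integral from below by $(1+\tau)^{-(r+1)}$, reducing the problem to a lower estimate for
$$\frac{1}{t-s}\int_{\mathbb{S}^{n-1}}\int_{\rho_\omega}^{\sigma_\omega}(1+\tau)^{-(r+1)}\,d\tau\, dS_\omega.$$
This is then handled by separating the regimes $t-s\gtrsim t$, where the $\tau$-integral between $\rho_\omega\lesssim\langle s\rangle$ and $\tau\asymp t$ produces a factor $\asymp\langle s\rangle^{-r}$, from the regime $t-s\ll t$, in which $s\asymp t$ and the mean value of $(1+\tau)^{-(r+1)}$ on $[\rho_\omega,\sigma_\omega]$ is $\asymp\langle s\rangle^{-(r+1)}$, so that the interval length $2(t-s)$ cancels the prefactor $1/(t-s)$.

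For (iii), the specialization $s=t$ reduces the $\sinh$-quotient to $1$ and leaves $\eta_r(t,t,x)=\int_0^{\lambda_0}e^{-\lambda(R+t)}\Phi(\lambda x)\lambda^r d\lambda$. I would then invoke the pointwise bound $\Phi(y)\lesssim(1+|y|)^{-(n-1)/2}e^{|y|}$, coming from the stationary phase evaluation of the spherical integral (valid for $n\geqslant 2$), to reduce matters to estimating $\int_0^{\lambda_0}e^{-\lambda(R+t-|x|)}(1+\lambda|x|)^{-(n-1)/2}\lambda^r d\lambda$, which is handled by rescaling $\mu=\lambda|x|$ when $|x|$ is not too small and splitting the integration at $\mu\asymp 1$. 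The assumption $r>(n-3)/2$ enters precisely in the tail $\mu\geqslant 1$, where the integrand behaves like $\mu^{r-(n-1)/2}$ and yields the contribution $|x|^{r-(n-3)/2}$ that, combined with the rescaling factor $|x|^{-(r+1)}$ and $|x|\asymp t$ in the relevant regime, produces $\langle t\rangle^{-(n-1)/2}$; the residual factor $\langle t-|x|\rangle^{(n-3)/2-r}$ comes from the $e^{-\lambda(R+t-|x|)}$ weight. The anticipated main obstacle is the lower bound for $\eta_r$ in case (ii): the prefactor $1/(t-s)$ becomes singular as $s\to t$, and the weaker $\langle s\rangle^{-(r+1)}$ bound that the $\xi_r$-type argument would give must be upgraded to $\langle t\rangle^{-1}\langle s\rangle^{-r}$ through the $t-s$ regime dichotomy described above.
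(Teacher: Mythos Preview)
Your approach is correct and genuinely different from the paper's. The paper does not give a self-contained proof: it cites \cite[Lemma~3.1]{WakYor18} for (i), (iii), and the $\eta_r$-bound in (ii), observing that those arguments go through unchanged under the weaker hypothesis $r>-1$, and then supplies only the $\xi_r$-lower bound in (ii), which is new. For that single estimate the paper uses a scaling trick: it shrinks the $\lambda$-integration to the window $[\lambda_0/\langle s\rangle,\,2\lambda_0/\langle s\rangle]$, invokes the two-sided asymptotic $\Phi(x)\asymp\langle x\rangle^{-(n-1)/2}e^{|x|}$ to bound $e^{-\lambda(s+R)}\Phi(\lambda x)$ below by a constant on that window, uses $e^{-\lambda(t-s)}\cosh\lambda(t-s)\geqslant\tfrac12$, and integrates $\lambda^r$ over the window to produce $\langle s\rangle^{-(r+1)}$.

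Your route, by contrast, unfolds $\Phi(\lambda x)$ via the spherical integral, applies Fubini, and reduces everything to the one-variable gamma-type estimate $\int_0^{\lambda_0}e^{-\lambda\tau}\lambda^r\,d\lambda\asymp(1+\tau)^{-(r+1)}$. This gives a unified, self-contained treatment of all four lower bounds without appealing to the Bessel asymptotic of $\Phi$; the asymptotic is needed only for the upper bound (iii), exactly as you say. The cost is that the $\eta_r$-lower bound in (ii) requires your regime dichotomy in $t-s$, which is slightly more delicate than the paper's localization argument; a clean way to execute it is to restrict the $\tau$-integral to $[\rho_\omega,\min(\sigma_\omega,C\langle s\rangle)]$ and distinguish whether $\sigma_\omega$ exceeds $C\langle s\rangle$, which separates the two cases and makes the constants uniform in $r>-1$. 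The benefit of your argument is transparency and independence from \cite{WakYor18}; the benefit of the paper's is brevity and a particularly clean handling of the $\cosh$ factor via the identity $e^{-\lambda(t-s)}\cosh\lambda(t-s)=\tfrac12(1+e^{-2\lambda(t-s)})$.
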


\begin{rem} Let us stress that differently from \cite[Lemma 3.1]{WakYor18} we require in the statement of (i) and (ii) the condition of $r>-1$ instead of $r>0$. Nonetheless, the proofs from \cite{WakYor18} of (i) and of the lower bound for $\eta_r(t,s,x)$ in (ii) are still valid even for $r>-1$. 
\end{rem}

\begin{proof} We can restrict our considerations to the lower bound estimate for $\xi(t,s,x)$ in (ii), as the other properties are already proved in \cite[Lemma 3.1]{WakYor18}. Since $\langle s \rangle \geqslant 2$, we may shrink the domain of integration in the definition of $\xi_r(t,s,x)$ as follows 
\begin{align*}
\xi_r(t,s,x) & \geqslant  \int_{\lambda_0/ \langle s \rangle}^{2\lambda_0/ \langle s \rangle} e^{-\lambda(R+t)} \cosh \lambda (t-s) \, \Phi(\lambda x) \, \lambda^r \, d\lambda.
\end{align*}
 We remark that the condition $$\Phi(x)\asymp \langle x\rangle^{-\frac{n-1}{2}}e^{|x|} \quad \mbox{for any} \ \ x\in \mathbb{R}^n$$ implies that the infimum $$  \inf_{\lambda\in \left[\frac{\lambda_0}{\langle s\rangle} , \frac{2\lambda_0}{\langle s\rangle}\right]}\inf_{|x|\leqslant s+R} e^{-\lambda (s+R)}\Phi(\lambda x) $$ can be estimate from below by a constant $A=A(\lambda_0,R)>0$ that does not depend on $\lambda$, $s$ and $x$. Therefore, we may estimate
 \begin{align*}
\xi_r(t,s,x) & \geqslant    \int_{\lambda_0/ \langle s \rangle}^{2\lambda_0/ \langle s \rangle}e^{-\lambda (t-s)} \cosh \lambda (t-s) \, e^{-\lambda(R+s)}  \,\Phi(\lambda x) \, \lambda^r \, d\lambda \\ & = \int_{\lambda_0/ \langle s \rangle}^{2\lambda_0/ \langle s \rangle}\frac {1}{2} \left(1+e^{-2\lambda (t-s)}\right) \, e^{-\lambda(R+s)}  \,\Phi(\lambda x) \, \lambda^r \, d\lambda \geqslant A \int_{\lambda_0/ \langle s \rangle}^{2\lambda_0/ \langle s \rangle}\frac {1}{2} \left(1+e^{-2\lambda (t-s)}\right) \, \lambda^r \, d\lambda \\ & \geqslant \frac {A}{2} \int_{\lambda_0/ \langle s \rangle}^{2\lambda_0/ \langle s \rangle}  \, \lambda^r \, d\lambda =\frac {A}{2} \frac{\lambda_0^{r+1}}{(r+1)}(2^{r+1}-1)\langle s \rangle^{-r-1},
\end{align*} which is the desired lower bound estimate for $\xi_r(t,s,x)$.
\end{proof}

\subsection{Derivation of the iteration frame in the critical case}
\label{Subsection iteration frame} 

In order to derive the iteration scheme, we have to consider separately the three critical cases. In each case we will fix suitable conditions on the pair $(r_1,r_2)$, that will influence, on the one hand, the structure of the scheme itself with the possible presence of a logarithmic factor in the integral inequalities and, on the other hand, the functional $\mathcal{U}$ and/or $\mathcal{V}$ for which we can derive a lower bound containing a logarithmic factor.

\subsubsection{Case $\Theta_1(n,p,q)=0$} \label{Subsubsection Theta1=0 OIIs}


In this case we consider $r_1=\frac{n-1}{2}-\frac{1}{p}$ and $r_2>\frac{n-1}{2}-\frac{1}{q}$. The purpose of this section is to derive the frame for the iteration argument, which is a coupled system of integral inequalities for the functionals $\mathcal{U},\mathcal{V}$. In order to get this system we will combine the fundamental identities \eqref{fundamental identity mathcalU}, \eqref{fundamental identity mathcalV} and the estimates for the auxiliary functions in Lemma \ref{lemma eta and xi estimates}. Combining \eqref{support condition solution}, \eqref{def mathcalU} and H\"older's inequality, we find 
\begin{align}
\mathcal{U}(s) \leqslant \bigg(\int_{\mathbb{R}^n}|\partial_t u(s,x)|^p \eta_{r_2}(t,s,x)\, dx\bigg)^{\frac{1}{p}}  \Bigg(\int_{B_{R+s}}\frac{\eta_{r_1}(s,s,x)^{p'}}{\eta_{r_2}(t,s,x)^{\frac{p'}{p}}}\,dx\Bigg)^{\frac{1}{p'}}. \label{mathcal U holder}
\end{align} 
Using Lemma \ref{lemma eta and xi estimates} (ii)-(iii) and the condition $r_1=\frac{n-1}{2} -\frac{1}{p}$, we may estimate
\begin{align*}
\int_{B_{R+s}}\frac{\eta_{r_1}(s,s,x)^{p'}}{\eta_{r_2}(t,s,x)^{\frac{p'}{p}}}\,dx & \lesssim \langle t \rangle^{\frac{p'}{p}}\langle s\rangle^{r_2 \frac{p'}{p}-\frac{n-1}{2}p'} \int_{B_{R+s}} \langle s-|x|\rangle^{\left(\frac{n-3}{2}-r_1\right)p'}\, dx  \\ & \lesssim \langle t \rangle^{\frac{p'}{p}}\langle s\rangle^{r_2 \frac{p'}{p}-\frac{n-1}{2}p'+n-1}\log \langle s\rangle  .
\end{align*} Therefore, we get
\begin{align*}
\int_{\mathbb{R}^n}|\partial_t u(s,x)|^p \eta_{r_2}(t,s,x)\, dx & \gtrsim  \big(\mathcal{U}(s)\big)^p \Bigg(\int_{B_{R+s}}\frac{\eta_{r_1}(s,s,x)^{p'}}{\eta_{r_2}(t,s,x)^{\frac{p'}{p}}}\,dx\Bigg)^{-\frac{p}{p'}} \\ & \gtrsim \langle t \rangle^{-1}\langle s\rangle^{-r_2 +\frac{n-1}{2}p-(n-1)(p-1)}(\log \langle s\rangle )^{-(p-1)} \big(\mathcal{U}(s)\big)^p.
\end{align*} Consequently, from \eqref{fundamental identity mathcalV} we obtain 
\begin{align}\label{OII mathcalV case Theta1}
\mathcal{V}(t) \gtrsim \langle t \rangle^{-1} \int_0^t (t-s) \langle s\rangle^{-r_2 +\frac{n-1}{2}p-(n-1)(p-1)}(\log \langle s\rangle )^{-(p-1)} \big(\mathcal{U}(s)\big)^p \, ds.
\end{align}
Now we will derive an analogous integral lower bound for $\mathcal{U}$. By \eqref{def mathcalV} and H\"older's inequality we have
\begin{align}
\mathcal{V}(s) \leqslant \bigg(\int_{\mathbb{R}^n}|v(s,x)|^q \xi_{r_1}(t,s,x)\, dx\bigg)^{\frac{1}{q}}  \Bigg(\int_{B_{R+s}}\frac{\eta_{r_2}(s,s,x)^{q'}}{\xi_{r_1}(t,s,x)^{\frac{q'}{q}}}\,dx\Bigg)^{\frac{1}{q'}}. \label{mathcal V holder}
\end{align} Employing again Lemma \ref{lemma eta and xi estimates} and the condition $r_2> \frac{n-1}{2} -\frac{1}{q}$, we arrive at
\begin{align*}
\int_{B_{R+s}}\frac{\eta_{r_2}(s,s,x)^{q'}}{\xi_{r_1}(t,s,x)^{\frac{q'}{q}}} \,dx & \lesssim \langle s\rangle^{(r_1+1) \frac{q'}{q}-\frac{n-1}{2}q'} \int_{B_{R+s}} \langle s-|x|\rangle^{\left(\frac{n-3}{2}-r_2\right)q'}\, dx  \\ & \lesssim \langle s\rangle^{(r_1+1) \frac{q'}{q}-\frac{n-1}{2}q'+n+\left(\frac{n-3}{2}-r_2\right)q' },
\end{align*} which implies in turn
\begin{align*}
 \int_{\mathbb{R}^n}|v(s,x)|^q \xi_{r_1}(t,s,x)\, dx \, ds & \gtrsim   \big(\mathcal{V}(s)\big)^q \Bigg(\int_{B_{R+s}}\frac{\eta_{r_2}(s,s,x)^{q'}}{\xi_{r_1}(t,s,x)^{\frac{q'}{q}}}\,dx\Bigg)^{-\frac{q}{q'}}  \\
 & \gtrsim  \langle s\rangle^{-(r_1+1) +\frac{n-1}{2}q-n(q-1)-\left(\frac{n-3}{2}-r_2\right)q } \big(\mathcal{V}(s)\big)^q .
\end{align*} Finally, \eqref{fundamental identity mathcalU} and the previous inequality yield
\begin{align} \label{OII mathcalU case Theta1}
\mathcal{U}(t) &\gtrsim \int_0^t  \langle s\rangle^{-r_1+n-1 -(n-1)q +r_2q } \big(\mathcal{V}(s)\big)^q\, ds.
\end{align}

\subsubsection{Case $\Theta_2(n,p,q)=0$} \label{Subsubsection Theta2=0 OIIs}

For this critical case we assume $r_1>\frac{n-1}{2}-\frac{1}{p}$ and $r_2=\frac{n-1}{2}-\frac{1}{q}$.
Due to the fact that we switch in some sense the role of $r_1$ and $r_2$ with respect to the previous critical case $\Theta_1(n,p,q)=0$, somehow also the structure of the iteration frame is reversed with respect to the previous section.

By Lemma \ref{lemma eta and xi estimates} (ii)-(iii) and the condition $r_1>\frac{n-1}{2} -\frac{1}{q}$ it follows
\begin{align*}
\int_{B_{R+s}}\frac{\eta_{r_1}(s,s,x)^{p'}}{\eta_{r_2}(t,s,x)^{\frac{p'}{p}}}\,dx & \lesssim \langle t \rangle^{\frac{p'}{p}}\langle s\rangle^{r_2 \frac{p'}{p}-\frac{n-1}{2}p'} \int_{B_{R+s}} \langle s-|x|\rangle^{\left(\frac{n-3}{2}-r_1\right)p'}\, dx  \\ & \lesssim \langle t \rangle^{\frac{p'}{p}}\langle s\rangle^{r_2 \frac{p'}{p}-\frac{n-1}{2}p'+n+\left(\frac{n-3}{2}-r_1\right)p'}.
\end{align*} Then, from \eqref{mathcal U holder} we get
\begin{align*}
\int_{\mathbb{R}^n}|\partial_t u(s,x)|^p \eta_{r_2}(t,s,x)\, dx 
&  \gtrsim \langle t \rangle^{-1}\langle s\rangle^{-r_2 +\frac{n-1}{2}p-n(p-1)-\left(\frac{n-3}{2}-r_1\right)p} \big(\mathcal{U}(s)\big)^p \\
&  \gtrsim \langle t \rangle^{-1}\langle s\rangle^{-r_2 -(n-1)p+n+r_1 p} \big(\mathcal{U}(s)\big)^p.
\end{align*} Also, \eqref{fundamental identity mathcalV} yields
\begin{align}\label{OII mathcalV case Theta2}
\mathcal{V}(t) \gtrsim \langle t \rangle^{-1} \int_0^t (t-s) \langle s\rangle^{-r_2 -(n-1)p+n+r_1 p}\big(\mathcal{U}(s)\big)^p\, ds.
\end{align}
We determine now the integral lower bound for $\mathcal{U}$. By using Lemma \ref{lemma eta and xi estimates} and the condition $r_2= \frac{n-1}{2} -\frac{1}{q}$, we arrive at
\begin{align*}
\int_{B_{R+s}}\frac{\eta_{r_2}(s,s,x)^{q'}}{\xi_{r_1}(t,s,x)^{\frac{q'}{q}}} \,dx & \lesssim \langle s\rangle^{(r_1+1) \frac{q'}{q}-\frac{n-1}{2}q'} \int_{B_{R+s}} \langle s-|x|\rangle^{\left(\frac{n-3}{2}-r_2\right)q'}\, dx  \\ & \lesssim \langle s\rangle^{(r_1+1) \frac{q'}{q}-\frac{n-1}{2}q'+n-1 }\log\langle s \rangle.
\end{align*} The last estimate together with \eqref{mathcal V holder} provides
\begin{align*}
 \int_{\mathbb{R}^n}|v(s,x)|^q \xi_{r_1}(t,s,x)\, dx  
 & \gtrsim  \langle s\rangle^{-(r_1+1) +\frac{n-1}{2}q-(n-1)(q-1)} (\log\langle s \rangle)^{-(q-1)} \big(\mathcal{V}(s)\big)^q.
\end{align*} Thus, \eqref{fundamental identity mathcalU} and the last estimate imply
\begin{align} \label{OII mathcalU case Theta2}
\mathcal{U}(t) &\gtrsim \int_0^t   \langle s\rangle^{-(r_1+1) +\frac{n-1}{2}q-(n-1)(q-1)} (\log\langle s \rangle)^{-(q-1)} \big(\mathcal{V}(s)\big)^q \, ds.
\end{align}

\subsubsection{Case $\Theta_1(n,p,q)=\Theta_2(n,p,q)=0$}

 In this case we choose $r_1=\frac{n-1}{2}-\frac{1}{p}$ and $r_2=\frac{n-1}{2}-\frac{1}{q}$.  In particular, one can prove the identities 
\begin{align}
\tfrac{n-1}{2}-\tfrac{1}{p} &=n-1-\tfrac{n-1}{2}q , \label{identity r1} \\
\tfrac{n-1}{2}-\tfrac{1}{q} &=n-\tfrac{n-1}{2}p, \label{identity r2}
 \end{align} 
 due to the fact that the pair $(p,q)$ satisfies both the critical conditions $\Theta_1(n,p,q)=\Theta_2(n,p,q)=0$. Indeed, if we denote $\kappa_1=n-1-\tfrac{n-1}{2}q-\tfrac{n-1}{2}+\tfrac{1}{p}$ and $\kappa_1=n-\tfrac{n-1}{2}p-\tfrac{n-1}{2}+\tfrac{1}{q}$, then
 \begin{align*}
 \kappa_1+q\kappa_2 &= (pq-1)\Theta_1(n,p,q)=0, \\
 p\kappa_1+\kappa_2 &= (pq-1)\Theta_2(n,p,q)=0.
 \end{align*} As $pq\neq 1$, then, trivially $\kappa_1=\kappa_2=0$, but this means exactly the validity of \eqref{identity r1}-\eqref{identity r2}.
 
Since $r_1=\frac{n-1}{2}-\frac{1}{p}$ as in Section \ref{Subsubsection Theta1=0 OIIs}, we can prove \eqref{OII mathcalV case Theta1}. However, thanks to \eqref{identity r2} we see that the power of $\langle s \rangle$ in the right hand side of \eqref{OII mathcalV case Theta1} is exactly $-1$, that is,
\begin{align}\label{OII mathcalV case Theta1Theta2}
\mathcal{V}(t) \gtrsim \langle t \rangle^{-1} \int_0^t (t-s) \langle s\rangle^{-1}(\log \langle s\rangle )^{-(p-1)} \big(\mathcal{U}(s)\big)^p \, ds.
\end{align}

Similarly, since $r_2=\frac{n-1}{2}-\frac{1}{q}$ as in Section \ref{Subsubsection Theta2=0 OIIs} it holds \eqref{OII mathcalU case Theta2}. Yet, due to \eqref{identity r1} we find again that  the power of $\langle s \rangle$ in the right hand side of \eqref{OII mathcalU case Theta2} is exactly $-1$, that is,
\begin{align}\label{OII mathcalU case Theta1Theta2}
\mathcal{U}(t) \gtrsim \int_0^t  \langle s\rangle^{-1}(\log \langle s\rangle )^{-(q-1)} \big(\mathcal{V}(s)\big)^q \, ds.
\end{align}

\subsection{Lower bound estimates for the functionals containing a logarithmic factor} \label{Subsection lower bound log}

Purpose of this section is to derive lower bounds for $\mathcal{U}$ and/or $\mathcal{V}$ of logarithmic type. 
As in the previous section, we shall consider separately the three critical cases. We point out that the assumptions on the pair $(r_1,r_2)$ are the same as in Section \ref{Subsection iteration frame} and they depend on the critical case that we consider.

\subsubsection{Case $\Theta_1(n,p,q)=0$ }

In this case we will derive a lower bound for the functional $\mathcal{U}$ in two step. From \eqref{fundamental identity mathcalV}, Lemma \ref{lemma eta and xi estimates} (ii) and Proposition \ref{prop lower bound nonlinearity}, we get for $t\geqslant 0$
\begin{align}
\mathcal{V}(t) & \geqslant \int_0^t (t-s) \int_{\mathbb{R}^n}|\partial_t u(s,x)|^p \eta_{r_2}(t,s,x) \, dx \, ds \gtrsim \langle t \rangle^{-1} \int_0^t (t-s)\langle s \rangle^{-r_2} \int_{\mathbb{R}^n}|\partial_t u(s,x)|^p  \, dx \, ds \notag \\
& \gtrsim \varepsilon ^p \langle t \rangle^{-1} \int_0^t (t-s)\langle s \rangle^{-r_2+n-1-\frac{n-1}{2}p}  \, ds. \label{lower bound mathcalV intermediate}
\end{align} Consequently, for $t\geqslant 1$
\begin{align*}
\mathcal{V}(t)  & \gtrsim \varepsilon ^p \langle t \rangle^{-1-r_2-\frac{n-1}{2}p} \int_0^t (t-s)\langle s \rangle^{n-1}  \, ds  \gtrsim \varepsilon ^p \langle t \rangle^{-1-r_2-\frac{n-1}{2}p} \int_{\tfrac{t}{2}}^t (t-s)\langle s \rangle^{n-1} \, ds \\
& \gtrsim \varepsilon ^p \langle t \rangle^{-1-r_2-\frac{n-1}{2}p} \langle \tfrac{t}{2} \rangle^{n-1} \int_{\tfrac{t}{2}}^t (t-s)  \, ds  \gtrsim\varepsilon ^p \langle t \rangle^{-r_2-\frac{n-1}{2}p+n} .
\end{align*} Plugging the last lower bound for $\mathcal{V}$ in \eqref{OII mathcalU case Theta1}, we have for $t\geqslant 1$
\begin{align}
\mathcal{U}(t) &\gtrsim  \varepsilon^{pq}  \int_1^t  \langle s\rangle^{-r_1+n-1 -(n-1)q +r_2q +\left(-r_2-\frac{n-1}{2}p+n\right)q} \, ds \notag \\
&\gtrsim  \varepsilon^{pq}  \int_1^t  \langle s\rangle^{-r_1+n-1 +q -\frac{n-1}{2}pq} \, ds  \gtrsim  \varepsilon^{pq}  \int_1^t  \langle s\rangle^{ q+p^{-1} -\frac{n-1}{2}(pq-1)} \, ds \notag \\ &\gtrsim \varepsilon^{pq}  \int_1^t  \langle s\rangle^{ -1} ds \gtrsim \varepsilon^{pq}  \int_1^t   s^{-1}  ds \gtrsim \varepsilon^{pq} \log t , \label{lower bound mathcalU log case Theta1}
\end{align} where we used in the third inequality the actual value of $r_1$ and in the fourth one the critical condition  $\Theta_1(n,p,q)=0$.

\subsubsection{Case $\Theta_2(n,p,q)=0$ }

Let us determine a lower bound for $\mathcal{V}$ in two step. From \eqref{fundamental identity mathcalU}, Lemma \ref{lemma eta and xi estimates} (ii) and Proposition \ref{prop lower bound nonlinearity} we obtain for $t\geqslant 0$
\begin{align}
\mathcal{U}(t) & \geqslant \int_0^t \int_{\mathbb{R}^n}|v(s,x)|^q \xi_{r_1}(t,s,x) \, dx \, ds  \gtrsim  \int_0^t \langle s \rangle^{-(r_1+1)} \int_{\mathbb{R}^n}|v(s,x)|^q  \, dx \, ds \notag \\
& \gtrsim \varepsilon ^q  \int_0^t \langle s \rangle^{-(r_1+1)+n-1-\frac{n-1}{2}q}  \, ds. \label{lower bound mathcalU intermediate}
\end{align} Also, for $t\geqslant 0$
\begin{align*}
\mathcal{U}(t)  & \gtrsim \varepsilon ^q  \int_0^t \langle s \rangle^{-(r_1+1)+n-1-\frac{n-1}{2}q}  \, ds  \gtrsim \varepsilon ^q \langle t \rangle^{-(r_1+1)-\frac{n-1}{2}q} \int_0^t \langle s \rangle^{n-1}  \, ds \\
& \gtrsim \varepsilon ^q \langle t \rangle^{-(r_1+1)-\frac{n-1}{2}q+n}.
\end{align*} Plugging the last lower bound for $\mathcal{U}$ in \eqref{OII mathcalV case Theta2}, we have for $t\geqslant \frac{3}{2}$
\begin{align}
\mathcal{V}(t) &\gtrsim \varepsilon^{pq}  \langle t \rangle^{-1} \int_0^t (t-s) \langle s\rangle^{-r_2 -(n-1)p+n+r_1 p+\left(-(r_1+1)-\frac{n-1}{2}q+n\right)p} ds\notag \\
& = \varepsilon^{pq}  \langle t \rangle^{-1} \int_0^t (t-s) \langle s\rangle^{-r_2+n-\frac{n-1}{2}pq} ds  
 =\varepsilon^{pq}  \langle t \rangle^{-1} \int_0^t (t-s) \langle s\rangle^{1+q^{-1}-\frac{n-1}{2}(pq-1)} ds\notag \\
 & = \varepsilon^{pq}  \langle t \rangle^{-1} \int_0^t (t-s) \langle s\rangle^{-1} ds \gtrsim  \varepsilon^{pq}  \langle t \rangle^{-1} \int_1^t \frac{t-s }{s} \, ds  \gtrsim  \varepsilon^{pq}  \langle t \rangle^{-1} \int_1^t \log s \, ds \notag  \\
  & \gtrsim  \varepsilon^{pq}  \langle t \rangle^{-1} \int_{\tfrac{2t}{3}}^t \log s \, ds  \gtrsim  \varepsilon^{pq}  \log \Big(\frac{2t}{3} \Big), \label{lower bound mathcalV log case Theta2}
\end{align} where we employed in the third step the actual value of $r_2$ and in the fourth one the critical condition  $\Theta_2(n,p,q)=0$.

\subsubsection{Case $\Theta_1(n,p,q)=\Theta_2(n,p,q)=0$ }

In this last case we can improve both \eqref{lower bound mathcalU log case Theta1} and \eqref{lower bound mathcalV log case Theta2} thanks to \eqref{identity r1}, \eqref{identity r2}. Indeed, combining \eqref{lower bound mathcalU intermediate} and \eqref{identity r1}, for any $t\geqslant 0$ we obtain 
\begin{align} \label{lower bound mathcalU log case Theta1Theta2}
\mathcal{U}(t) \gtrsim \varepsilon^q \int_0^t \langle s \rangle^{-1}ds \gtrsim \varepsilon^q  \log t.
\end{align} Analogously, using \eqref{lower bound mathcalV intermediate} and \eqref{identity r2},  for any $t\geqslant \frac{3}{2}$ we have
\begin{align} \label{lower bound mathcalV log case Theta1Theta2}
\mathcal{V}(t) \gtrsim \varepsilon^p \langle t\rangle^{-1} \int_0^t (t-s) \langle s \rangle^{-1}ds \gtrsim \varepsilon^p  \log \Big(\frac{2t}{3}\Big).
\end{align}

\subsection{Iterated lower bound estimates: slicing method}
\label{Subsection iteration argument slicing method}

In this section we derive iteratively a sequence of lower bound estimates for $\mathcal{U}$ or $\mathcal{V}$. Then, in Section \ref{Subsection lifespan critical case} we will employ these iterated lower bounds to prove the blow-up and to derive the upper bound for the lifespan of the local solution $(u,v)$.

 However, before starting with this iterative procedure, we summarize the estimates that we proved in Sections \ref{Subsection iteration frame} and \ref{Subsection lower bound log}.
 
 In Section \ref{Subsection iteration frame} we proved the coupled system of integral inequalities
 \begin{align}\label{OIIs mathcalU}
 & \begin{cases} \mathcal{U}(t) \geqslant C \displaystyle{\int_0^t } \langle s\rangle^{-r_1+n-1 -(n-1)q +r_2q } \big(\mathcal{V}(s)\big)^q\, ds  & \mbox{if} \ \ \Theta_1=0, \\ \mathcal{U}(t) \geqslant C \displaystyle{\int_0^t }  \langle s\rangle^{-(r_1+1) +\frac{n-1}{2}q-(n-1)(q-1)} (\log\langle s \rangle)^{-(q-1)} \big(\mathcal{V}(s)\big)^q \, ds  & \mbox{if} \ \ \Theta_2=0,\\  \mathcal{U}(t) \geqslant C  \displaystyle{\int_0^t} \langle s\rangle^{-1}(\log \langle s\rangle )^{-(q-1)} \big(\mathcal{V}(s)\big)^q \, ds  & \mbox{if} \ \ \Theta_1=\Theta_2=0,\end{cases} \\ 
  & \begin{cases} \mathcal{V}(t)  \geqslant K \langle t \rangle^{-1} \displaystyle{\int_0^t} (t-s) \langle s\rangle^{-r_2 +\frac{n-1}{2}p-(n-1)(p-1)}(\log \langle s\rangle )^{-(p-1)} \big(\mathcal{U}(s)\big)^p \, ds   & \mbox{if} \ \ \Theta_1=0, \\ \mathcal{V}(t)  \geqslant K  \langle t \rangle^{-1} \displaystyle{\int_0^t} (t-s) \langle s\rangle^{-r_2 -(n-1)p+n+r_1 p}\big(\mathcal{U}(s)\big)^p\, ds  & \mbox{if} \ \ \Theta_2=0,\\ \mathcal{V}(t)  \geqslant K \langle t \rangle^{-1} \displaystyle{\int_0^t} (t-s) \langle s\rangle^{-1}(\log \langle s\rangle )^{-(p-1)} \big(\mathcal{U}(s)\big)^p \, ds & \mbox{if} \ \ \Theta_1=\Theta_2=0,\end{cases}
 \label{OIIs mathcalV}
 \end{align} for any $t\geqslant 0$, where $C,K$ are positive constants depending on $n,p,q,R$. Let us underline that the range for the pair $(r_1,r_1)$ is implicitly fixed by the corresponding critical case according to Section \ref{Subsection iteration frame}.
 
 On the other hand,  the lower bound estimates \eqref{lower bound mathcalU log case Theta1}, \eqref{lower bound mathcalV log case Theta2}, \eqref{lower bound mathcalU log case Theta1Theta2} and \eqref{lower bound mathcalV log case Theta1Theta2} from Section \ref{Subsection lower bound log} can be summarized as follows
 \begin{align}
 \begin{cases}  \mathcal{U}(t)\geqslant \widetilde{C} \varepsilon^{pq} \log t & \mbox{if} \ \ \Theta_1=0, \\
  \mathcal{U}(t)\geqslant \widetilde{C} \varepsilon^q \log t& \mbox{if} \ \ \Theta_1=\Theta_2=0 , \end{cases} \label{lower bound mathcalU log case}
 \end{align} for any $t\geqslant 1$ and 
 \begin{align}
 \begin{cases} \mathcal{V}(t)\geqslant  \widetilde{K} \varepsilon^{pq} \log \big(\frac{2t}{3}\big) & \mbox{if} \ \ \Theta_2=0, \\
\mathcal{V}(t)\geqslant  \widetilde{K} \varepsilon^p \log \big(\frac{2t}{3}\big) & \mbox{if} \ \ \Theta_1=\Theta_2=0,  \end{cases} \label{lower bound mathcalV log case}
 \end{align} for any $t\geqslant \frac{3}{2}$, where $\widetilde{C},\widetilde{K}$ are positive constants depending on $n,p,q,R,u_0,u_1,v_0,v_1$.
 
 Now we can start with the iteration argument. As in the previous sections, we consider separately the three critical cases.
 
 \subsubsection{Case $\Theta_1(n,p,q)=0$ }
 
 Let us introduce the sequence of positive real numbers $\{\ell_j\}_{j\in \mathbb{N}}$, where $\ell_j\doteq 2-2^{-j}$, that will be use to split the time interval in the slicing method. In this case the goal is to prove that \begin{align}
\mathcal{U}(t)\geqslant C_j \big(\log\langle t\rangle\big)^{-b_j} \bigg(\log\bigg(\frac{t}{\ell_{j}}\bigg)\bigg)^{a_j} \qquad \mbox{for} \ \ t\geqslant \ell_{j} \ \ \mbox{and for any} \ \ j\in \mathbb{N}, \label{iter ineq crit case mathcalU Theta1}
\end{align} where $\{C_j\}_{j\in\mathbb{N}}$,  $\{a_j\}_{j\in\mathbb{N}}$ and  $\{b_j\}_{j\in\mathbb{N}}$ are sequences of nonnegative real numbers that we shall determine throughout the iteration argument. Thanks to \eqref{lower bound mathcalU log case} we see that \eqref{iter ineq crit case mathcalU Theta1} is satisfied for $j=0$, provided that the initial values of the sequences are given by $a_0\doteq 1 , b_0\doteq 0 $ and  $C_0\doteq \widetilde{C} \varepsilon^{pq}$. Hence, we employ an inductive argument to prove the validity of \eqref{iter ineq crit case mathcalU Theta1} for any $j\in \mathbb{N}$. We proceed now with the inductive step. Let us plug \eqref{iter ineq crit case mathcalU Theta1} in \eqref{OIIs mathcalV}, after shrinking the domain of integration, then, for $s\geqslant \ell_{j+1}$ we obtain
\begin{align*}
\mathcal{V}(s) & \geqslant K C_j^p \langle s \rangle^{-1} \int_{\ell_{j}}^s (s-\tau) \langle \tau\rangle^{-r_2 +\frac{n-1}{2}p-(n-1)(p-1)}\big(\log \langle \tau \rangle \big)^{-(p-1)-b_jp} \left(\log\left(\tfrac{\tau}{\ell_{j}}\right)\right)^{a_j p} \, d\tau \\
& \geqslant K C_j^p   \langle s\rangle^{-1-r_2 -\frac{n-1}{2}p} \big(\log \langle s\rangle \big)^{-(p-1)-b_jp} \int_{\ell_{j}}^s (s-\tau) \langle \tau\rangle^{n-1} \left(\log\left(\tfrac{\tau}{\ell_{j}}\right)\right)^{a_j p} \, d\tau \\
& \geqslant K C_j^p   \langle s\rangle^{-1-r_2 -\frac{n-1}{2}p} \big(\log \langle s\rangle \big)^{-(p-1)-b_jp} \int_{\tfrac{\ell_{j} s}{\ell_{j+1}}}^s (s-\tau)  \tau^{n-1} \left(\log\left(\tfrac{\tau}{\ell_{j}}\right)\right)^{a_j p} \, d\tau  \\
& \geqslant K C_j^p \left(\tfrac{\ell_{j}}{\ell_{j+1}}\right)^{n-1}  \langle s\rangle^{-1-r_2 -\frac{n-1}{2}p}s^{n-1} \big(\log \langle s\rangle \big)^{-(p-1)-b_jp} \left(\log\left(\tfrac{s}{\ell_{j+1}}\right)\right)^{a_j p}\int_{\tfrac{\ell_{j} s}{\ell_{j+1}}}^s (s-\tau)   \, d\tau \\
& \geqslant 2^{-1} K C_j^p \left(\tfrac{\ell_{j}}{\ell_{j+1}}\right)^{n-1} \left(1-\tfrac{\ell_{j} }{\ell_{j+1}}\right)^2 \langle s\rangle^{-1-r_2 -\frac{n-1}{2}p}s^{n+1} \big(\log \langle s\rangle \big)^{-(p-1)-b_jp} \left(\log\left(\tfrac{s}{\ell_{j+1}}\right)\right)^{a_j p}
\\
& \geqslant 2^{-2j-3n-6} K C_j^p  \langle s\rangle^{-r_2 -\frac{n-1}{2}p+n} \big(\log \langle s\rangle \big)^{-(p-1)-b_jp} \left(\log\left(\tfrac{s}{\ell_{j+1}}\right)\right)^{a_j p},
\end{align*} where in the last step we used the inequalities $2\ell_{j}\geqslant  \ell_{j+1}$, $1-\frac{\ell_{j}}{\ell_{j+1}}\geqslant 2^{-(j+2)}$ and $4 s\geqslant \langle s \rangle$ for any $s\geqslant 1$. Using this lower bound for $\mathcal{V}(s)$ in \eqref{OIIs mathcalU} and the critical relation $\Theta_1(n,p,q)=0$, for $t\geqslant \ell_{j+1}$ we get
\begin{align*}
\mathcal{U}(t) & \geqslant  2^{-2qj-3q(n+2)} C K^q C_j^{pq} \int_{\ell_{j+1}}^t \langle s\rangle^{-r_1+n-1 +q -\frac{n-1}{2}pq }   \big(\log \langle s\rangle \big)^{-q(p-1)-b_jpq} \left(\log\left(\tfrac{s}{\ell_{j+1}}\right)\right)^{a_j pq}\, ds \\
& \geqslant  2^{-2qj-3q(n+2)} C K^q C_j^{pq} \big(\log \langle t\rangle \big)^{-q(p-1)-b_jpq} \int_{\ell_{j+1}}^t \langle s\rangle^{q+\frac{1}{p} -\frac{n-1}{2}(pq-1) }    \left(\log\left(\tfrac{s}{\ell_{j+1}}\right)\right)^{a_j pq}\, ds  \\
& =  2^{-2qj-3q(n+2)} C K^q C_j^{pq} \big(\log \langle t\rangle \big)^{-q(p-1)-b_jpq} \int_{\ell_{j+1}}^t \langle s\rangle^{-1 }    \left(\log\left(\tfrac{s}{\ell_{j+1}}\right)\right)^{a_j pq}\, ds \\
& =  2^{-2qj-3q(n+2)} C K^q C_j^{pq} (a_jpq+1)^{-1}\big(\log \langle t\rangle \big)^{-q(p-1)-b_jpq}\left(\log\left(\tfrac{t}{\ell_{j+1}}\right)\right)^{a_j pq+1},
\end{align*} which is exactly \eqref{iter ineq crit case mathcalU Theta1} for $j+1$, if we define 
\begin{align*}
C_{j+1} \doteq 2^{-2qj-3q(n+2)} C K^q C_j^{pq} (a_jpq+1)^{-1}, \quad a_{j+1}\doteq a_j pq+1, \ \ b_{j+1}\doteq q(p-1)+b_jpq. 
\end{align*}

In order to derive the upper bound estimate for the life span of the solution, it is convenient to derive an estimate from below of $C_j$, where the dependence on $j$ in the lower bound is more explicit than the one in the definition of $C_j$ itself. But first, let us derive the explicit expression for $a_j$ and $b_j$. Using iteratively the recursive relations between two successive elements that we just proved, we find
\begin{equation} \label{representation aj,bj crit case}
\begin{split}
a_j &=a_{j-1}pq+1= \cdots = a_0(pq)^j +\sum_{k=0}^{j-1} (pq)^k= (pq)^j +\tfrac{(pq)^j-1}{pq-1}=\tfrac{(pq)^{j+1}-1}{pq-1}, \\
b_j &=b_{j-1}pq+q(p-1) = \cdots = b_0(pq)^j +q(p-1)\sum_{k=0}^{j-1} (pq)^k=\tfrac{q(p-1)}{pq-1}\big((pq)^j-1\big).
\end{split}
\end{equation} Therefore,
\begin{align*}
a_{j-1}pq+1\leqslant \tfrac{pq}{pq-1}(pq)^j.
\end{align*} In particular, the previous inequality implies
\begin{align}\label{lower bound Cj crit}
C_j \geqslant M N^{-j} C_{j-1}^{pq},
\end{align} where $M\doteq 2^{-q(3n+4)} C K^q\tfrac{(pq-1)}{pq}$ and $N \doteq 2^{2q}pq$. 
Applying the logarithmic function to both sides of \eqref{lower bound Cj crit} and using iteratively the resulting inequality, we get
\begin{align*}
\log C_j & \geqslant (pq) \log C_{j-1} -j\log N+\log M \\
& \geqslant (pq)^2 \log C_{j-2} -\big(j+(j-1)(pq)\big)\log N+\big(1+pq\big)\log M \\
& \geqslant \cdots \geqslant  (pq)^j \log C_{0} -\sum_{k=0}^{j-1}(j-k)(pq)^k\log N+\sum_{k=0}^{j-1} (pq)^k\log M \\
& = (pq)^j \log C_{0} -(pq)^j \sum_{k=1}^{j}k(pq)^{-k} \log N+\tfrac{(pq)^j-1}{pq-1}\log M  \\ &= (pq)^j \bigg(\log C_{0} -S_j\log N+\tfrac{\log M }{pq-1}\bigg)-\tfrac{\log M }{pq-1},
\end{align*} where $S_j\doteq \sum_{k=1}^j k(pq)^{-k}$. As $\{S_j\}_{j\geqslant 1}$ is a sequence of the partial sums of a convergent series, if we denote by $S$ the limit of this sequence, because of $S_j\uparrow S$ as $j\to \infty$, then, we may estimate
\begin{align}\label{lower bound Cj crit exp}
C_j\geqslant M^{-(pq-1)}\exp\Big((pq)^j\log\Big(C_0 N^{-S} M^{pq-1}\Big)\Big).
\end{align}

 \subsubsection{Case $\Theta_2(n,p,q)=0$ }
 
 In this second critical case we shall prove that 
 \begin{align}
\mathcal{V}(t)\geqslant K_j \big(\log\langle t\rangle\big)^{-\beta_j} \bigg(\log\bigg(\frac{t}{\ell_{2j+1}}\bigg)\bigg)^{\alpha_j} \qquad \mbox{for} \ \ t\geqslant \ell_{2j+1} \ \ \mbox{and for any} \ \ j\in \mathbb{N}, \label{iter ineq crit case mathcalV Theta2}
\end{align} where $\{K_j\}_{j\in\mathbb{N}}$,  $\{\alpha_j\}_{j\in\mathbb{N}}$ and  $\{\beta_j\}_{j\in\mathbb{N}}$ are sequences of nonnegative real numbers that we will be fixed during the iterative procedure.  Due to \eqref{lower bound mathcalV log case} we see that \eqref{iter ineq crit case mathcalV Theta2} is satisfied for $j=0$, supposed that the initial values of the sequences are given by $\alpha_0\doteq 1 , \beta_0\doteq 0 $ and  $K_0\doteq \widetilde{K} \varepsilon^{pq}$. Also in this case it remains to prove the inductive step in order to show the validity of \eqref{iter ineq crit case mathcalV Theta2} for any $j\in\mathbb{N}$. For this purpose we plug in \eqref{iter ineq crit case mathcalV Theta2} in \eqref{OIIs mathcalU}, so that, after a restriction of the domain of integration, for $s\geqslant \ell_{2j+2}$ we have
\begin{align*}
\mathcal{U}(s) & \geqslant C K_j^q \int_{\ell_{2j+1}}^s \langle \tau\rangle^{-(r_1+1) +\frac{n-1}{2}q-(n-1)(q-1)} \big(\log\langle \tau \rangle\big)^{-(q-1)-\beta_jq}   \left(\log\left(\tfrac{\tau}{\ell_{2j+1}}\right)\right)^{\alpha_j q} \, d\tau \\
 & \geqslant C K_j^q \langle s\rangle^{-(r_1+1) -\frac{n-1}{2}q} \big(\log\langle s \rangle\big)^{-(q-1)-\beta_jq}  \int_{\ell_{2j+1}}^s   \langle \tau\rangle^{n-1} \left(\log\left(\tfrac{\tau}{\ell_{2j+1}}\right)\right)^{\alpha_j q} \, d\tau \\
 & \geqslant C K_j^q \langle s\rangle^{-(r_1+1) -\frac{n-1}{2}q} \big(\log\langle s \rangle\big)^{-(q-1)-\beta_jq}  \int_{\tfrac{\ell_{2j+1}s}{\ell_{2j+2}}}^s   \tau^{n-1} \left(\log\left(\tfrac{\tau}{\ell_{2j+1}}\right)\right)^{\alpha_j q} \, d\tau \\ 
 & \geqslant C K_j^q  \left(\tfrac{\ell_{2j+1}}{\ell_{2j+2}}\right)^{n-1}\left(1-\tfrac{\ell_{2j+1}}{\ell_{2j+2}}\right)\langle s\rangle^{-(r_1+1) -\frac{n-1}{2}q} s^{n} \big(\log\langle s \rangle\big)^{-(q-1)-\beta_jq}  \left(\log\left(\tfrac{s}{\ell_{2j+2}}\right)\right)^{\alpha_j q} \\
 & \geqslant 2^{-2j-3n-2} C K_j^q  \langle s\rangle^{-(r_1+1) -\frac{n-1}{2}q+n} \big(\log\langle s \rangle\big)^{-(q-1)-\beta_jq}  \left(\log\left(\tfrac{s}{\ell_{2j+2}}\right)\right)^{\alpha_j q}.
\end{align*}
Combining this lower bound for $\mathcal{U}(s)$ and \eqref{OIIs mathcalV} and using the critical relation $\Theta_2(n,p,q)=0$, for $t\geqslant \ell_{2j+3}$ we arrive at
\begin{align*}
\mathcal{V}(t) & \geqslant 2^{-2pj-(3n+2)p}  K C^p K_j^{pq}   \langle t \rangle^{-1} \int_{ \ell_{2j+2}}^t (t-s)  \langle s\rangle^{-r_2 +n -\frac{n-1}{2}pq} \big(\log\langle s \rangle\big)^{-p(q-1)-\beta_jq}  \left(\log\left(\tfrac{s}{\ell_{2j+2}}\right)\right)^{\alpha_j pq}\, ds \\
& \geqslant 2^{-2pj-(3n+2)p}  K C^p K_j^{pq} \big(\log\langle t \rangle\big)^{-p(q-1)-\beta_jq}     \langle t \rangle^{-1} \int_{ \ell_{2j+2}}^t (t-s)  \langle s\rangle^{1+\frac{1}{q}-\frac{n-1}{2}(pq-1)} \left(\log\left(\tfrac{s}{\ell_{2j+2}}\right)\right)^{\alpha_j pq}\, ds \\
& = 2^{-2pj-(3n+2)p}  K C^p K_j^{pq} \big(\log\langle t \rangle\big)^{-p(q-1)-\beta_jq}     \langle t \rangle^{-1} \int_{ \ell_{2j+2}}^t (t-s)  \langle s\rangle^{-1} \left(\log\left(\tfrac{s}{\ell_{2j+2}}\right)\right)^{\alpha_j pq}\, ds \\
& \geqslant 2^{-2pj-(3n+2)p-2}  K C^p K_j^{pq} \big(\log\langle t \rangle\big)^{-p(q-1)-\beta_jq}     \langle t \rangle^{-1} \int_{ \ell_{2j+2}}^t \tfrac{t-s}{s} \left(\log\left(\tfrac{s}{\ell_{2j+2}}\right)\right)^{\alpha_j pq}\, ds \\
& = 2^{-2pj-(3n+2)p-2}  K C^p K_j^{pq} (\alpha_j pq+1)^{-1} \big(\log\langle t \rangle\big)^{-p(q-1)-\beta_jq}     \langle t \rangle^{-1} \int_{ \ell_{2j+2}}^t  \left(\log\left(\tfrac{s}{\ell_{2j+2}}\right)\right)^{\alpha_j pq+1}\, ds \\
& \geqslant 2^{-2pj-(3n+2)p-2}  K C^p K_j^{pq} (\alpha_j pq+1)^{-1} \big(\log\langle t \rangle\big)^{-p(q-1)-\beta_jq}     \langle t \rangle^{-1} \int_{\tfrac{ \ell_{2j+2}t}{\ell_{2j+3}}}^t  \left(\log\left(\tfrac{s}{\ell_{2j+2}}\right)\right)^{\alpha_j pq+1}\, ds \\
& \geqslant 2^{-2pj-(3n+2)p-4}  K C^p K_j^{pq} (\alpha_j pq+1)^{-1} \left(1-\tfrac{ \ell_{2j+2}}{\ell_{2j+3}}\right)\big(\log\langle t \rangle\big)^{-p(q-1)-\beta_jq}     \left(\log\left(\tfrac{t}{\ell_{2j+3}}\right)\right)^{\alpha_j pq+1} \\
& \geqslant 2^{-2(p+1)j-(3n+2)p-8}  K C^p K_j^{pq} (\alpha_j pq+1)^{-1} \big(\log\langle t \rangle\big)^{-p(q-1)-\beta_jq}     \left(\log\left(\tfrac{t}{\ell_{2j+3}}\right)\right)^{\alpha_j pq+1},
\end{align*} that is \eqref{iter ineq crit case mathcalV Theta2} for $j+1$, provided that 
\begin{align*} K_{j+1}\doteq 2^{-2(p+1)j-(3n+2)p-8}  K C^p K_j^{pq} (\alpha_j pq+1)^{-1}, \quad \alpha_{j+1}\doteq \alpha_j pq+1, \ \ \beta_{j+1}\doteq \beta_jq+ p(q-1).
\end{align*}  
Analogously to what we did  in the first critical case, we derive now a lower bound for $K_j$. Let us find first the expression of $\alpha_j$ and $\beta_j$. Applying iteratively the definitions of $\alpha_j$ and $\beta_j$, we end up with the representation formulas
\begin{equation} \label{representation alphaj,betaj crit case}
\begin{split}
\alpha_j & =\alpha_{j-1} pq+1 = \cdots = \alpha_0 (pq)^j +\tfrac{(pq)^j-1}{pq-1}= \tfrac{(pq)^{j+1}-1}{pq-1}, \\
\beta_j & =\beta_{j-1} pq+p(q-1) = \cdots = \beta_0 (pq)^j +\tfrac{p(q-1)}{pq-1}\big((pq)^j-1\big)= \tfrac{p(q-1)}{pq-1}\big((pq)^j-1\big).
\end{split}
\end{equation} In particular, it holds the inequality $\alpha_{j-1} pq+1\leqslant \tfrac{pq}{pq-1}(pq)^{j} $, that implies in turn
\begin{align}\label{lower bound Kj}
K_j \geqslant M_1 N_1^{-j} K_{j-1}^{pq},
\end{align} where $M_1\doteq 2^{-3np-6}  K C^p \frac{(pq-1)}{pq}$ and $N_1\doteq 2^{2(p+1)}pq$.
Analogously to the derivation of \eqref{lower bound Cj crit exp} via \eqref{lower bound Cj crit}, from \eqref{lower bound Kj} we obtain
\begin{align}\label{lower bound Kj exp}
K_j\geqslant M_1^{-(pq-1)}\exp\Big((pq)^j\log\Big(K_0 N_1^{-S} M_1^{pq-1}\Big)\Big).
\end{align}

 \subsubsection{Case $\Theta_1(n,p,q)= \Theta_2(n,p,q)=0$ }
 
 In this third critical case the goal is to prove that
 \begin{align}
\mathcal{U}(t)\geqslant D_j \big(\log\langle t\rangle\big)^{-h_j} \bigg(\log\bigg(\frac{t}{\ell_{j}}\bigg)\bigg)^{g_j} \qquad \mbox{for} \ \ t\geqslant \ell_{j} \ \ \mbox{and for any} \ \ j\in \mathbb{N}, \label{iter ineq crit case mathcalU Theta1Theta2}
\end{align} where $\{D_j\}_{j\in\mathbb{N}}$,  $\{g_j\}_{j\in\mathbb{N}}$ and  $\{h_j\}_{j\in\mathbb{N}}$ are sequences of nonnegative real numbers that we shall determine throughout the iteration argument. Due to the different iteration scheme for $\Theta_1(n,p,q)= \Theta_2(n,p,q)=0$ in \eqref{OIIs mathcalU} and \eqref{OIIs mathcalV}, the proof of the inductive step will have somehow a more symmetric behavior than the ones in the previous cases. We point out that \eqref{lower bound mathcalU log case} implies the validity of \eqref{iter ineq crit case mathcalU Theta1Theta2} in the base case $j=0$ if we consider
\begin{align*}
D_0\doteq \widetilde{C} \varepsilon^q, \quad g_0\doteq 1, \ \ h_0\doteq 0.
\end{align*}

Let us proceed with the inductive step. If we plug \eqref{iter ineq crit case mathcalU Theta1Theta2} in \eqref{OIIs mathcalV}, then for $s\geqslant \ell_{j+1}$ we get
 \begin{align*}
 \mathcal{V}(s) & \geqslant K  D_j^p  \langle s \rangle^{-1} \int_{\ell_{j}}^s (s-\tau) \langle \tau \rangle^{-1}\big(\log \langle \tau\rangle \big)^{-(p-1)-h_jp}  \left( \log\left(\tfrac{\tau}{\ell_{j}}\right)\right)^{g_j p}\, d\tau \\
 & \geqslant 2^{-2} K  D_j^p  \big(\log \langle s\rangle \big)^{-(p-1)-h_jp} \langle s \rangle^{-1}  \int_{\ell_{j}}^s \tfrac{s-\tau}{ \tau} \left( \log\left(\tfrac{\tau}{\ell_{j}}\right)\right)^{g_j p}\, d\tau \\
 & = 2^{-2} K  D_j^p (g_jp+1)^{-1}  \big(\log \langle s\rangle \big)^{-(p-1)-h_jp}  \langle s \rangle^{-1} \int_{\ell_{j}}^s  \left( \log\left(\tfrac{\tau}{\ell_{j}}\right)\right)^{g_j p+1}\, d\tau  \\
 & \geqslant 2^{-2} K  D_j^p (g_jp+1)^{-1}  \big(\log \langle s\rangle \big)^{-(p-1)-h_jp}  \langle s \rangle^{-1} \int_{\tfrac{\ell_{j}s}{\ell_{j+1}}}^s  \left( \log\left(\tfrac{\tau}{\ell_{j}}\right)\right)^{g_j p+1}\, d\tau \\
  & \geqslant 2^{-4} K  D_j^p (g_jp+1)^{-1}  \left(1-\tfrac{\ell_{j}}{\ell_{j+1}}\right) \big(\log \langle s\rangle \big)^{-(p-1)-h_jp}  \left( \log\left(\tfrac{s}{\ell_{j+1}}\right)\right)^{g_j p+1} \\
  & \geqslant 2^{-(j+6)} K  D_j^p (g_jp+1)^{-1}  \big(\log \langle s\rangle \big)^{-(p-1)-h_jp}  \left( \log\left(\tfrac{s}{\ell_{j+1}}\right)\right)^{g_j p+1}.
 \end{align*}
A combination of the previous lower bound for $\mathcal{V}(s)$ and \eqref{OIIs mathcalU} yields for $t\geqslant \ell_{j+1}$
\begin{align*}
\mathcal{U}(t) & \geqslant 2^{-(j+6)q} C K^q  D_j^{pq} (g_jp+1)^{-q} \int_{\ell_{j+1}}^t \langle s\rangle^{-1}   \big(\log \langle s\rangle \big)^{-(pq-1)-h_jpq}  \left( \log\left(\tfrac{s}{\ell_{j+1}}\right)\right)^{g_j pq+q} \, ds \\
& \geqslant 2^{-(j+6)q-2} C K^q  D_j^{pq} (g_jp+1)^{-q}  \big(\log \langle t\rangle \big)^{-(pq-1)-h_jpq}  \int_{\ell_{j+1}}^t  s^{-1}   \left( \log\left(\tfrac{s}{\ell_{j+1}}\right)\right)^{g_j pq+q} \, ds \\
& = 2^{-(j+6)q-2} C K^q  D_j^{pq} (g_jp+1)^{-q} (g_j pq+q+1)^{-1} \big(\log \langle t\rangle \big)^{-(pq-1)-h_jpq}     \left( \log\left(\tfrac{t}{\ell_{j+1}}\right)\right)^{g_j pq+q+1}. 
\end{align*} The last inequality is \eqref{iter ineq crit case mathcalU Theta1Theta2} in the case $j+1$ with
\begin{align*}
D_{j+1}\doteq 2^{-(j+6)q-2} C K^q  D_j^{pq} (g_jp+1)^{-q} (g_j pq+q+1)^{-1} , \quad g_{j+1}\doteq g_j pq+q+1, \ \  h_{j+1}\doteq h_jpq+ pq-1.
\end{align*} Finally, we find a lower bound for the coefficient $D_j$. First, we have
\begin{equation} \label{representation gj,hj crit case}
\begin{split}
g_j & = g_{j-1} pq+q+1= \cdots = g_0 (pq)^j +(q+1) \sum_{k=0}^{j-1} (pq)^k= \left(1+\tfrac{q+1}{pq-1}\right)(pq)^j- \tfrac{q+1}{pq-1}, \\
h_j & = h_{j-1} pq+pq-1= \cdots = h_0 (pq)^j +(pq-1) \sum_{k=0}^{j-1} (pq)^k= (pq)^j-1.
\end{split}
\end{equation}
Hence, $$g_{j-1} p+1\leqslant g_{j-1} pq+q+1\leqslant \tfrac{q(p+1)}{pq-1}(pq)^j$$ implies 
\begin{align}\label{lower bound Dj}
D_j \geqslant M_2N_2^{-j} D_{j-1}^{pq},
\end{align} where $M_2\doteq 2^{-5q-2} C K^q \big(\tfrac{pq-1}{q(p+1)}\big)^{q+1}$ and $N_2\doteq 2^q (pq)^{q+1}$. In an analogous way as in the derivation of \eqref{lower bound Cj  crit exp} through \eqref{lower bound Cj crit}, by \eqref{lower bound Dj} we have
\begin{align}\label{lower bound Dj exp}
D_j\geqslant M_2^{-(pq-1)}\exp\Big((pq)^j\log\Big(D_0 N_2^{-S} M_2^{pq-1}\Big)\Big).
\end{align}

\subsection{Upper bound for the lifespan of local solutions} \label{Subsection lifespan critical case}

In this section we finally prove that a local solution $(u,v)$ of \eqref{weakly coupled system crit} blows up in finite time under the assumption of Theorem \ref{Thm crit case}. As in the previous sections we will consider separately the three critical cases.

\subsubsection{Case $\Theta_1(n,p,q)=0$ }

If we combine \eqref{iter ineq crit case mathcalU Theta1}, \eqref{representation aj,bj crit case} and \eqref{lower bound Cj crit exp}, then, for $t\geqslant 2\geqslant \ell_j$ we have
\begin{align*}
\mathcal{U}(t) & \geqslant M^{-(pq-1)}\exp\Big((pq)^j\log\big(C_0 N^{-S} M^{pq-1}\big)\Big)\big(\log\langle t\rangle\big)^{-b_j} \Big(\log\big(\tfrac{t}{\ell_{j}}\big)\Big)^{a_j} \\
&  \geqslant M^{-(pq-1)}\exp\Big((pq)^j\log\big(C_0 N^{-S} M^{pq-1}\big)\Big)\big(\log\langle t\rangle\big)^{-b_j} \Big(\log\big(\tfrac{t}{2}\big)\Big)^{a_j} \\
&  \geqslant M^{-(pq-1)} \bigg(\tfrac{\left(\log\langle t\rangle\right)^{q(p-1)}}{\log\left(\tfrac{t}{2}\right)}\bigg)^{\frac{1}{pq-1}}\exp\Big((pq)^j\log\Big(\big(C_0 N^{-S} M^{pq-1}\big)-\tfrac{q(p-1)}{pq-1}\log\langle t \rangle +\tfrac{pq}{pq-1}\log\big(\tfrac{t}{2}\big)\Big)\Big). 
\end{align*} Because for $t\geqslant 4$ the inequalities $\log\langle t \rangle \leqslant \log (2t)\leqslant 2\log t$ and $\log\big(\tfrac{t}{2}\big)\geqslant \tfrac{1}{2}\log t$ hold, then for $t\geqslant 4$ the last estimate from below for $\mathcal{U}(t)$ implies
\begin{align}
\mathcal{U}(t) & \geqslant M^{-(pq-1)} \bigg(\tfrac{\left(\log\langle t\rangle\right)^{q(p-1)}}{\log\left(\tfrac{t}{2}\right)}\bigg)^{\frac{1}{pq-1}}\exp\Big((pq)^j\log\Big(2^{-\frac{q(2p-1)}{pq-1}} C_0 N^{-S} M^{pq-1}(\log t)^{\frac{q}{pq-1}}\Big)\Big)\notag \\
& \geqslant M^{-(pq-1)} \bigg(\tfrac{\left(\log\langle t\rangle\right)^{q(p-1)}}{\log\left(\tfrac{t}{2}\right)}\bigg)^{\frac{1}{pq-1}}\exp\Big((pq)^j\log\Big(E\varepsilon^{pq}(\log t)^{\frac{q}{pq-1}}\Big)\Big), \label{lower bound mathcalU exp log final}
\end{align} where $E\doteq 2^{-\frac{q(2p-1)}{pq-1}} \widetilde{C} N^{-S} M^{pq-1}$. \newline Let us point out that $H(t,\varepsilon)\doteq E\varepsilon^{pq}(\log t)^{\frac{q}{pq-1}}>1$ if and only if $t>\exp \big(E^{-\frac{pq-1}{q}}\varepsilon^{-p(pq-1)}\big)$. Consequently, we can fix a sufficiently small $\varepsilon_0$ such that $$\exp \big(E^{-\frac{pq-1}{q}}\varepsilon_0^{-p(pq-1)}\big)\geqslant 4.$$ Then, for any $\varepsilon\in (0,\varepsilon_0]$ and $t>\exp \big(E^{-\frac{pq-1}{q}}\varepsilon^{-p(pq-1)}\big) \geqslant 4$ it holds $H(t,\varepsilon)>1$; so, letting $j\to \infty$ in \eqref{lower bound mathcalU exp log final}, we find that the lower bound for $\mathcal{U}(t)$ blows up. Thus, we proved that $\mathcal{U}(t)$ can be finite only for 
\begin{align}\label{lifespan crit case Theta1}
t\leqslant \exp \big(E^{-\frac{pq-1}{q}}\varepsilon^{-p(pq-1)}\big),
\end{align}
 which is the upper bound estimate for the lifespan in \eqref{lifespan upper bound estimate crit} when $\Theta_1(n,p,q)=0$.

\subsubsection{Case $\Theta_2(n,p,q)=0$ }

Combining \eqref{iter ineq crit case mathcalV Theta2}, \eqref{representation alphaj,betaj crit case} and \eqref{lower bound Kj exp}, then, for $t\geqslant 2\geqslant \ell_{2j+1}$ we have
\begin{align*}
\mathcal{V}(t) & \geqslant M_1^{-(pq-1)}\exp\Big((pq)^j\log\big(K_0 N_1^{-S} M_1^{pq-1}\big)\Big)\big(\log\langle t\rangle\big)^{-\beta_j} \Big(\log\big(\tfrac{t}{\ell_{2j+1}}\big)\Big)^{\alpha_j} \\
&  \geqslant M_1^{-(pq-1)}\exp\Big((pq)^j\log\big(K_0 N_1^{-S} M_1^{pq-1}\big)\Big)\big(\log\langle t\rangle\big)^{-\beta_j} \Big(\log\big(\tfrac{t}{2}\big)\Big)^{\alpha_j} \\
&  \geqslant M_1^{-(pq-1)} \bigg(\tfrac{\left(\log\langle t\rangle\right)^{p(q-1)}}{\log\left(\tfrac{t}{2}\right)}\bigg)^{\frac{1}{pq-1}}\exp\Big((pq)^j\log\Big(\big(K_0 N_1^{-S} M_1^{pq-1}\big)-\tfrac{p(q-1)}{pq-1}\log\langle t \rangle +\tfrac{pq}{pq-1}\log\big(\tfrac{t}{2}\big)\Big)\Big). 
\end{align*} Analogously as in the last section, for $t\geqslant 4$ this estimate from below for $\mathcal{V}(t)$ provides
\begin{align}
\mathcal{V}(t) & \geqslant M_1^{-(pq-1)} \bigg(\tfrac{\left(\log\langle t\rangle\right)^{p(q-1)}}{\log\left(\tfrac{t}{2}\right)}\bigg)^{\frac{1}{pq-1}}\exp\Big((pq)^j\log\Big(2^{-\frac{p(2q-1)}{pq-1}} K_0 N_1^{-S} M_1^{pq-1}(\log t)^{\frac{p}{pq-1}}\Big)\Big)\notag \\
& \geqslant M_1^{-(pq-1)} \bigg(\tfrac{\left(\log\langle t\rangle\right)^{p(q-1)}}{\log\left(\tfrac{t}{2}\right)}\bigg)^{\frac{1}{pq-1}}\exp\Big((pq)^j\log\Big(E_1\varepsilon^{pq}(\log t)^{\frac{p}{pq-1}}\Big)\Big), \label{lower bound mathcalV exp log final}
\end{align} where $E_1\doteq 2^{-\frac{p(2q-1)}{pq-1}} \widetilde{K} N_1^{-S} M_1^{pq-1}$.  If we denote $H_1(t,\varepsilon)\doteq E_1\varepsilon^{pq}(\log t)^{\frac{p}{pq-1}}$, then, $H_1(t,\varepsilon)>1$ if and only if $t>\exp \big(E_1^{-\frac{pq-1}{p}}\varepsilon^{-q(pq-1)}\big)$. Therefore, we can choose a sufficiently small $\varepsilon_0$ such that $\exp \big(E_1^{-\frac{pq-1}{p}}\varepsilon_0^{-q(pq-1)}\big)\geqslant 4$. Also, for any $\varepsilon\in (0,\varepsilon_0]$ and $t>\exp \big(E_1^{-\frac{pq-1}{p}}\varepsilon^{-q(pq-1)}\big)$ we have $H_1(t,\varepsilon)>1$; thus, taking the limit as $j\to \infty$ in \eqref{lower bound mathcalV exp log final}, we find that the lower bound for $\mathcal{V}(t)$ blows up. Hence, we showed that $\mathcal{V}(t)$ may be finite just for 
\begin{align}\label{lifespan crit case Theta2}
t\leqslant \exp \big(E_1^{-\frac{pq-1}{p}}\varepsilon^{-q(pq-1)}\big),
\end{align} which is exactly the upper bound estimate for the lifespan in \eqref{lifespan upper bound estimate crit} for $\Theta_2(n,p,q)=0$.

 \subsubsection{Case $\Theta_1(n,p,q)= \Theta_2(n,p,q)=0$ }
For $t\geqslant 2\geqslant \ell_{2j+1}$ the combination of \eqref{iter ineq crit case mathcalU Theta1Theta2}, \eqref{representation gj,hj crit case} and \eqref{lower bound Dj exp} leads to
\begin{align*}
\mathcal{U}(t) & \geqslant M_2^{-(pq-1)}\exp\Big((pq)^j\log\big(D_0 N_2^{-S} M_2^{pq-1}\big)\Big)\big(\log\langle t\rangle\big)^{-h_j} \Big(\log\big(\tfrac{t}{\ell_{j}}\big)\Big)^{g_j} \\
&  \geqslant M_2^{-(pq-1)}\exp\Big((pq)^j\log\big(D_0 N_2^{-S} M_2^{pq-1}\big)\Big)\big(\log\langle t\rangle\big)^{-h_j} \Big(\log\big(\tfrac{t}{2}\big)\Big)^{g_j} \\
&  \geqslant M_2^{-(pq-1)} \log\langle t\rangle\left(\log\left(\tfrac{t}{2}\right)\right)^{-\frac{q+1}{pq-1}}\exp\Big((pq)^j\log\Big(\big(D_0 N_2^{-S} M_2^{pq-1}\big)-\log\langle t \rangle +\left(1+\tfrac{q+1}{pq-1}\right)\log\big(\tfrac{t}{2}\big)\Big)\Big). 
\end{align*} Similarly to the last sections, for $t\geqslant 4$ the above estimate from below for $\mathcal{U}(t)$ yields
\begin{align}
\mathcal{V}(t) & \geqslant M_2^{-(pq-1)} \log\langle t\rangle\left(\log\left(\tfrac{t}{2}\right)\right)^{-\frac{q+1}{pq-1}}\exp\Big((pq)^j\log\Big(2^{-2-\frac{q+1}{pq-1}} D_0 N_2^{-S} M_2^{pq-1}(\log t)^{\frac{q+1}{pq-1}}\Big)\Big)\notag \\
& \geqslant M_2^{-(pq-1)} \log\langle t\rangle\left(\log\left(\tfrac{t}{2}\right)\right)^{-\frac{q+1}{pq-1}} \exp\Big((pq)^j\log\Big(E_2\varepsilon^{q}(\log t)^{\frac{q+1}{pq-1}}\Big)\Big), \label{lower bound mathcalV exp log final Theta1Theta2}
\end{align} where $E_2\doteq 2^{-2-\frac{q+1}{pq-1}} \widetilde{C} N_2^{-S} M_2^{pq-1}$.  Let us denote $H_2(t,\varepsilon)\doteq E_2\varepsilon^{q}(\log t)^{\frac{q+1}{pq-1}}$. Then, $H_2(t,\varepsilon)>1$ if and only if $t>\exp \big(E_2^{-\frac{pq-1}{q+1}}\varepsilon^{-\frac{q}{q+1}(pq-1)}\big)$. Therefore, as before we can choose a sufficiently small $\varepsilon_0$ such that $\exp \big(E_2^{-\frac{pq-1}{q+1}}\varepsilon_0^{-\frac{q}{q+1}(pq-1)}\big)\geqslant 4$. Also, for any $\varepsilon\in (0,\varepsilon_0]$ and $t>\exp \big(E_2^{-\frac{pq-1}{q+1}}\varepsilon^{-\frac{q}{q+1}(pq-1)}\big)$ we have $H_2(t,\varepsilon)>1$; thus, taking the limit as $j\to \infty$ in \eqref{lower bound mathcalV exp log final Theta1Theta2}, we see that the lower bound for $\mathcal{U}(t)$ diverges. So, we proved that if $\mathcal{U}(t)$ is finite, then,  
\begin{align} \label{lifespan crit case Theta1Theta2}
t\leqslant \exp \big(E_2^{-\frac{pq-1}{q+1}}\varepsilon^{-\frac{q}{q+1}(pq-1)}\big),
\end{align} that is, we proved \eqref{lifespan upper bound estimate crit} in the critical case $\Theta_1(n,p,q)=\Theta_2(n,p,q)=0$.

\subsection{Final remarks on the critical case} \label{Subsection comparison with ISW}

\subsubsection{Comparison with other results}

As we have already mentioned in the introduction, Ikeda-Sobajima-Wakasa very recently proved a blow-up result for the semilinear weakly coupled system \eqref{weakly coupled system crit} both in the subcritical case and in the critical case, by using a revised test function method. While in the subcritical case we obtained exactly the same result (but including damping terms in the scattering case), in the critical case we got quite different estimates for the lifespan in all three subcases. Let us compare our results with theirs.

In the first critical case $\Theta_1(n,p,q)=0$ we proved the estimate \eqref{lifespan crit case Theta1}, while  in \cite{ISW18} the upper bound estimate
\begin{align} \label{lifespan crit case Theta1 ISW}
T(\varepsilon)\leqslant \exp\big(C\varepsilon^{-q(pq-1)}\big)
\end{align} is proved. Let us point out that in the critical case $\Theta_1(n,p,q)=0>\Theta_1(n,p,q) $  it is not possible to determine, in general, which exponent among $p$ and $q$ is the biggest one. So far, the best estimate for the lifespan that we can get is the one obtained combining \eqref{lifespan crit case Theta1} and \eqref{lifespan crit case Theta1 ISW}, that is,
\begin{align*}
T(\varepsilon)\leqslant \exp\left(C\varepsilon^{-\min\{p(pq,1),q(pq-1)\}}\right) \quad \mbox{if} \ \  \Theta_1(n,p,q)=0.
\end{align*}

On the contrary, in the case $\Theta_2(n,p,q)=0$ we obtained \eqref{lifespan crit case Theta2}, which is an improvement of the estimate $T(\varepsilon)\leqslant \exp\big(C\varepsilon^{-p(pq-1)}\big)$ proved in \cite{ISW18} in the same critical case. Indeed, in this case we have 
\begin{align*}
\tfrac{q+1+p^{-1}}{pq-1}-\tfrac{n-1}{2}<0=\tfrac{2+q^{-1}}{pq-1}-\tfrac{n-1}{2}
\end{align*} which provides $q-q^{-1}<1-p^{-1}<p-p^{-1}$, that implies in turn $q<p$.

We consider now the case $\Theta_1(n,p,q)= \Theta_2(n,p,q)=0$ . We point out explicitly that in this critical case we could employ an iteration argument for the functional $\mathcal{V}$ as well in the last section. Nevertheless, we would find as upper bound for the lifespan $$T(\varepsilon)\leqslant \exp\big(C\varepsilon^{-\frac{p}{p+1}(pq-1)}\big)$$ which is weaker than the one that we derived by working with $\mathcal{U}$, namely,  \eqref{lifespan crit case Theta1Theta2}. This is due to the comparison of the two critical conditions $\Theta_1(n,p,q)=0$ and $ \Theta_2(n,p,q)=0$ that lead to $q-q^{-1}=1-p^{-1}<p-p^{-1}$, which implies as above $q<p$. Morever, we emphasize that we have improved the estimate for this case in comparison to the on in \cite{ISW18} for the corresponding case, namely, $T(\varepsilon)\leqslant \exp\big(C\varepsilon^{-(pq-1)}\big)$.

\subsubsection{The intersection point of the critical curves}


Finally, we remark that in the critical case $\Theta_1(n,p,q)=\Theta_2(n,p,q)=0$, we can determine the expression of $p$ and $q$, that is, we determine the coordinates of the intersection point of the critical curves  in the $p$ - $q$ plane. By straightforward calculations, we get that $\Theta_1(n,p,q)=\Theta_2(n,p,q)$ implies
\begin{align}\label{p expression}
p= (1+q^{-1}-q)^{-1}.
\end{align} We underline that we should require $1<q<\frac{1+\sqrt{5}}{2}$, in order to get an admissible $p$. If we plug in \eqref{p expression} in $\Theta_2(n,p,q)=0$, we find that $q$ satisfies the cubic equation
\begin{align}
0 & =(n+1)q^3-\tfrac{n+1}{2}q^2-\tfrac{n+5}{2}q-1 \notag \\
& =(2q+1)\Big(\tfrac{n+1}{2}q^2-\tfrac{n+1}{2}q-1\Big). \label{cubic q}
\end{align} Therefore, the only admissible solution of \eqref{cubic q} is 
\begin{align*}
q_{\mix}(n)\doteq \frac{1}{2}\left(1+\sqrt{\frac{n+9}{n+1}}\right).
\end{align*} It is easy to check that $q_{\mix}(n)<\frac{1+\sqrt{5}}{2}$ for any $n\geqslant 2$. Plugging this expression for $q_{\mix}(n)$ in \eqref{p expression}, we get 
\begin{align*}
p_{\mix}(n) & \doteq \frac{q_{\mix}(n)}{1+q_{\mix}(n)-(q_{\mix}(n))^2} \\ &=\frac{n+1+\sqrt{(n+9)(n+1)}}{2(n-1)}.
\end{align*}

It is interesting to compare these exponents, $p_{\mix}(n)$ and $q_{\mix}(n)$, with the critical exponent for the semilinear wave equation with power nonlinearity, i.e., the Strauss exponent
$$p_{\Str}(n)=\frac{n+1+\sqrt{n^2+10n-7}}{2(n-1)}$$ and with the exponent for the semilinear wave equation of derivative type, i.e., the Glassey exponent $$p_{\Gla}(n)=\frac{n+1}{n-1}.$$

Elementary computations show that $$q_{\mix}(n) < p_{\Gla}(n)<p_{\Str}(n)<p_{\mix}(n)$$ for any $n\geqslant 2$. 
Therefore, we may conclude that for the cusp point of the critical curve for \eqref{weakly coupled system crit} the power of the nonlinear term $|\partial_t u|^p$ is bigger than the critical power for the semilinear wave equation of derivative type, while the power of the nonlinear term $|v|^q$ is smaller than the critical power for the semilinear wave equation with power nonlinearity. In this sense, we have a balance between $p$ and $q$ for the cusp point of the critical curve for the weakly coupled system of semilinear wave equations with mixed nonlinear terms, in comparison to the cases with power nonlinearities and of derivative type.

\section*{Acknowledgments}

The first author is member of the Gruppo Nazionale per L'Analisi Matematica, la Probabilit\`{a} e le loro Applicazioni (GNAMPA) of the Instituto Nazionale di Alta Matematica (INdAM). This paper was written partially during the stay of the first author at Tohoku University within the period October to December 2018. He thanks the Mathematical Institute of Tohoku University for the worm hospitality and the excellent working conditions during this period. The second author is partially supported by the Grant-in-Aid for Scientific Research (B)(No.18H01132).

\vspace*{0.5cm}

\end{document}